\DeclareMathOperator{\seed}{seed}
\DeclareMathOperator{\EGE}{EGE}
\DeclareMathOperator{\gen}{gen}
\DeclareMathOperator{\Ann}{Ann}
\author{ P. Djagba   \thanks{NMU, Port Elizabeth. 
		\href{mailto:prudence.djagba@aims.ac.rw}{prudence@aims.ac.za}} 
	\and S. Juglal  \thanks{NMU,  Port Elizabeth. 
		\href{mailto:jan.hazla@aims.ac.rw}{Suresh.Juglal@mandela.ac.za }}}
\title{On Classical Prime Near-ring Modules}
\date{}
\newtheorem{thm}{Theorem}
\newtheorem{lem}[thm]{Lemma}
\newtheorem{cor}[thm]{Corollary}
\newtheorem{exa}[thm]{Example}
\newtheorem{rem}[thm]{Remark}
\theoremstyle{definition}
\newtheorem{defn}[thm]{Definition}
\theoremstyle{Proposition}
\newtheorem{pro}[thm]{Proposition}
\begin{document}
	\maketitle
	
	\begin{abstract} 
	In 2005, M. Behboodi introduced the notion of a classical prime ring module, which he showed is, in general, nonequivalent to a (Dauns) prime ring module. In this paper, we extended the idea of classical primeness to near-ring module. However, unlike in the ring case, we were able to define and distinguish between various types of classical prime modules. We investigate four of them here. We also prove some  properties about the annihilator. Finally we characterize the classical $m$-systems of $R$-ideals of near-ring modules.
  \end{abstract}

  \small
{\it 
MSC: 16Y30,12K05 \\
\quad Key-words: Near-ring modules, classical prime. } \\
\normalsize
		\section{Introduction}

In 2005, Mahood Behboodi \cite{Behboodi} defined the notion of classical prime submodule of a ring module, M, as follows: A proper submodule, $P$ of $M$, is called classical prime if for all ideals, $A,B \subseteq R$ and for all submodules $N \subseteq M, \quad ABN \subseteq P $ implies that $AN \subseteq P$ or $BN \subseteq P.$ 

He went on to demonstrate that, in the case $M=R_R$ where $R$ is a commutative ring, classical prime submodules coincide with prime submodules but there may exist a left ideal $L$ in a noncommutative ring $R$ such that it is a classical prime submodule of $R$ but not a prime submodule of $R.$

Now suppose that $R$ is a near-ring and let $P$ be an $R$-ideal of a faithful $R$-module, $M$. Then we utilize the  above definition to define what is meant by $P$ being classical prime and hence define a classical prime module. However, due to the lack of one of the distributive properties as well as the fact that addition is, in general, non-commutative in a near-ring, we were able to define and distinguish between various types of classical primes module. We investigate four of them here, and show, by means of examples that these four types are nonequivalent within the class of all classical prime near-ring. Hence, in the near-ring case, we rename the notion of being classical prime as being prime  $0$-classical prime, and the other two as being 2-classical prime,  3-classical prime. It also turn out that for $v=0,2,3,c$, any $v$-prime $R$-ideal ($R$-module) is also $v$-classical prime but the reverse implication is, in general, not true.

In near-ring theory, $v$-prime ideals are closely linked to $m_v$-system sets. These sets are defined in (\cite{groenewald2010different}, Definition 1.30). We conclude this paper by defining the concepts of classical $m_v$-system ($v=0,2,3,c$) and show that an $R$-ideal $P$ of $M$ is $v$-classical prime if and only if its complement is a classical $m_v$-system.

Throughout this paper, $R$ will denote a zero-symmetric right near-ring (with identity only if specified) and M is faithful $R$-module.

The paper is organized as follows: Section 2 offers essential background definitions and results concerning near-rings, near-ring modules, and prime near-ring modules. These are necessary to introduce the concept of classical prime near-ring modules. In Section 3, we present the primary contribution of the paper, defining the notion of classical prime near-ring modules and providing several characterizations. Additionally, we demonstrate through examples the importance of this generalization from prime near-ring modules. Finally, in the last section, we conclude and propose potential avenues for future research.

		\section{Preliminaries}\label{sec:prelim}
		
		\subsection{Near-rings and near-ring modules}
		
		For basic concepts related to near-rings, we refer the readers to \cite{pilz2011near} for a more detailed presentation.

		\begin{defn}[Near-ring] 
			Let $(R, +, .)$ be a triple such that $(R, +)$ is a group, $(R, .)$ is a
			semigroup, and $(a+b).c = a.c+b.c$  for all $a, b, c \in R$. Then $(R, +, .)$ is
			called a \emph{(right) near-ring}.
		\end{defn}
		
		Let $(R, +, .)$  be a near-ring. By distributivity it is easy to see that for all $r \in R$ we have $ 0 . r = 0$. 
		However,
		it is not true in general that $r . 0  = 0$ for all $r \in R$. Define $R_0 = \{r  \in R : r . 0= 0\}$
		to be the zero-symmetric part of $R$. A near-ring is called \emph{zero-symmetric} if $R = R_0$ i.e.,
		$0 . r = r . 0 = 0$ for all $r \in R$. We will denote $R^*= R \backslash \{0 \}.$

		\begin{defn}[Near-field]
			Let $(R,+,.)$ be a  near-ring.
			If in addition $(R^*, .)$ is a group,
			then $(R, +, .)$ is called a 
			\emph{(right) near-field}.
			
			We will call a near-field $R$ \emph{proper} if
			it is not a skew-field, that is if there
			exist $a,b,c\in R$ such that  $a. (b +c)\ne a.b+a.c$.
		\end{defn}
	It is known
		that the additive group of a near-field is abelian \cite{pilz2011near}.
		The assumption that $R$ is zero-symmetric is only needed to exclude a degenerate
		case of $(\mathbb{Z}_2,+)$ with multiplication defined as $a.b=b$
		(see Proposition~8.1 in~\cite{pilz2011near}).

		\begin{defn}[$R$-Module] 
			A triple $(M, +, \circ) $ is called a \emph{(left) near-ring module} over a (right)
			near-ring $R$ if $(M,+)$ is a group and $\circ: R \times M \to M$ such that $(r_1 +r_2) \circ m=  r_1 \circ m +r_2 \circ m$ and $(r_1.r_2) \circ m =  r_1. (r_2 \circ m)$ for all $r_1, r_2 \in R$ and $m \in M$.
			
			We write $M_R$ to denote that $M$ is a (left) near-ring module over a (right) near-ring $R$.
		\end{defn}

		As is usual, from now on we will use $\cdot$ or  simply concatenation for both near-ring multiplication
		and vector-scalar multiplication. 
		We also define the $R$-module $R^n$ (for some fixed $n\in \mathbb{N}$) with element-wise addition and element-wise scalar multiplication $R \times R^n \to R^n$ given by $ r \cdot (v_1,v_2,\ldots,v_n) = (r v_1,rv_2,\ldots,rv_n)$. 
			\begin{defn}[$R$-subgroup] 
        \label{def:subgroup} Let $R$ be a near-ring. 
			A subset $H$ of $R$ is called a (two sided or invariant)  \emph{$R$-subgroup} of $R$ if:
   \begin{itemize}
       \item[(a)]  $H$ is a subgroup of $(R, +)$,
       \item[(b)]   $RH  \subseteq H$,
       \item[(c)]   $HR  \subseteq H$.
   \end{itemize}

		\end{defn}
   If in the above definition, (a) and (b) are satisfied , the $H$ is called a left $R$-subgroup whereas if  (a) and (c) are satisfied, then $H$ is called a right $R$-subgroup. If $H$ is a subgroup of $R,$ this will be denoted by $H \leq R.$
  \begin{rem}
      An $R$-subgroup $H$ of $R$ is called a normal subgroup if for all $r \in R$ and for all $h \in H$, we have $r+h-r \in H.$
  \end{rem}
  		\begin{defn}[Ideal] 
        \label{def:subgroup} Let $R$ be a near-ring. 
			A subset $I$ of $R$ is called a (two sided )  ideal of $R$ if:
   \begin{itemize}
       \item[(a)]  $I$ is a normal subgroup of $(R, +)$,
       \item[(b)]   $IR  \subseteq I$,
       \item[(c)]   $r_1(r_2+i)-r_1r_2 \in I$ for all $r_1.r_2 \in R$ and $\in I$.
   \end{itemize}
		\end{defn}

     If in the above definition, (a) and (b) are satisfied , the $I$ is called a righrt ideal of $R$ whereas if  (a) and (c) are satisfied, then $I$ is called left ideal of $R$. If $H$ is a subgroup of $R,$ this will be denoted by $H \leq R.$ An ideal I of $R$ will be denoted by $I \lhd R$.
		\begin{defn}[$R$-submodule] 
        \label{def:subgroup}
			A subset $H$ of a near-ring module $M_R$ is called an $R$-submodule of $M$ if $H$
			is a subgroup of $(M, +)$ and $RH = \{rh: h \in H, r  \in R\} \subseteq H.$ ( We denite this by $H \leq_R M$
		\end{defn}
		\begin{defn}[$R$-ideal] 
			Let $M_R$ be a near-ring module. $N$ is an $R$-ideal of $M_R$ if $(N, +)$ is a normal subgroup of $(M, +)$, and $r(m + n) - rm  \in N$ for all $m \in M$, $n \in N$ and $r \in R$. (We will denote this by  $P \lhd_R M).$
		\end{defn}

  		\begin{defn}[] 
        \label{def:subgroup} An $R$-module $M$ is called:
   \begin{itemize}
       \item[(a)]  monogenic if there exists an $m \in M$ (called generator of $M$) such that $Rm=M.$ 
       \item[(b)]  faithful if $r \in R$ and $rM=0$ implies $r=0.$
   \end{itemize}
		\end{defn}
		
Let $M$ be near-ring module. Then note that every $R$-ideal  of $M$  is an $R$-submodule    but the converse is not rue in general. In the case of a ring module the concept of $R$-ideal and   $R$-submodule coincide. 

\begin{defn}[ \cite{Juglal2008}] Let $P $ be a proper two-sided ideal of the near-ring $R$.  Then $P$ is called:
      
   \begin{itemize}
       \item[(a)]  $0$-prime if for all two-sided ideals $A,B$ of $R,$ $AB \subseteq P$ implies $A  \subseteq P$ or $B  \subseteq P.$
        \item[(b)]  $1$-prime if for all left ideals $A,B$ of $R,$ $AB \subseteq P$ implies $A  \subseteq P$ or $B  \subseteq P.$
        \item[(c)]  $2$-prime if for all $R$-subgroups $A,B$ of $R,$ $AB \subseteq P$ implies $A  \subseteq P$ or $B  \subseteq P.$
              \item[(d)]  $3$-prime if   $(aRb) \subseteq P$ implies $a \in P$ or $b \in P$ for all $a,b \in R.$
              \item[(e)]  $c$-prime if   $ab \in P$ implies $a \in P$ or $b \in P$ for all $a,b \in R$. (Here $c$-prime means completely prime).

   \end{itemize}
		\end{defn}
		The near-ring $R$ is said to be $i$-prime for $i \in  \{0,1,2,3,c \}$ if the zero ideal $ \{0 \}$ is $i$-prime. 
		\subsection{Near-vector spaces of the form $R^n$}
		\begin{defn}[Near-vector space] 
			Let $M_R$ be a near-ring module  and $R$ a near-field. $M_R$ is called a 
			\emph{(Beidleman) near-vector space} if $M_R$ is a near-ring module which is a direct sum of  submodules that have no proper $R$-subgroups.
			We say that a near-vector space is finite
			dimensional if it is such a finite direct sum.
		\end{defn}

		\begin{defn}[$R$-module isomorphism]
			Let $M_R$ and $N_R$ be two modules. A function $\Phi:M\to N$ is a $R$-module isomorphism
			if it is a bijection that respects $\Phi(m+n)=\Phi(m)+\Phi(n)$ and $\Phi(mr)=\Phi(m)r$
			for every $m,n\in M$ and $r\in R$.
		\end{defn}
		
		\begin{thm}[\cite{djagbahowell18,beidleman1966near}] Let $R$ be a (right) near-field and $M_R$ a (left) near-ring module. $M_R$ is
			a finite dimensional near-vector space if and only if $ M_R $ is isomorphic to $R^n$ for some positive integer 
			$n$.
		\end{thm}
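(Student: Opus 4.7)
The plan is to prove both directions by leveraging the fact that the only left $R$-subgroups of a near-field are the trivial ones. For the ``if'' direction, suppose $M_R \cong R^n$. Write $R^n = R_1 \oplus \cdots \oplus R_n$ where $R_i = \{0\}\times \cdots \times R \times \cdots \times \{0\}$. Each $R_i$ is clearly a submodule of $R^n$ and is $R$-isomorphic to $R$ itself. It then suffices to show $R$, viewed as a left $R$-module over itself, has no proper $R$-subgroups: if $H$ is such a subgroup and $0\ne h\in H$, then for any $x\in R$ one has $x=(xh^{-1})h\in Rh\subseteq H$, giving $H=R$. Hence $R^n$ meets the definition of a finite-dimensional near-vector space, and isomorphism transports the decomposition to $M_R$.

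For the harder ``only if'' direction, assume $M_R=M_1\oplus \cdots \oplus M_n$ with each $M_i$ a submodule having no proper $R$-subgroups. I would first argue that each $M_i$ is monogenic. Pick any $0\ne u_i\in M_i$; the set $Ru_i$ is a subgroup of $(M_i,+)$ (using $(r-s)u_i=ru_i-su_i$, which follows from the right distributivity of scalars in the module axiom) and is closed under $R$-multiplication by associativity, so it is an $R$-subgroup. By hypothesis $Ru_i=\{0\}$ or $Ru_i=M_i$; the first option would give $u_i=1\cdot u_i=0$, a contradiction, so $Ru_i=M_i$.

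Next I would show the natural map $\varphi_i\colon R\to M_i$, $r\mapsto ru_i$, is an $R$-module isomorphism. It is a module homomorphism since $(r+s)u_i=ru_i+su_i$ and $(rs)u_i=r(su_i)$, and it is surjective by the previous step. For injectivity, the kernel $K_i=\{r\in R : ru_i=0\}$ is a subgroup of $(R,+)$, and for any $s\in R$ and $r\in K_i$ we have $(sr)u_i=s(ru_i)=0$ by zero-symmetry, so $K_i$ is a left $R$-subgroup of $R$. By the triviality argument already used, $K_i=\{0\}$ (it cannot equal $R$, else $u_i=0$). Combining the $\varphi_i$ coordinatewise yields the desired isomorphism $M_R\cong R^n$.

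The main obstacle I anticipate is the careful bookkeeping required to verify that candidate subsets (such as $Ru_i$ and $K_i$) really are $R$-subgroups under the restricted distributive law of a near-ring; in particular one must be mindful that only $(r+s)m=rm+sm$ is available, not $r(m+n)=rm+rn$, and that zero-symmetry is what rescues $s\cdot 0=0$ when showing $K_i$ is closed under left multiplication. Once these verifications are in hand, the near-field property reduces everything to the observation that nonzero scalars are invertible, which both forces monogenicity and eliminates nontrivial kernels.
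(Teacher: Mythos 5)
Since the paper only cites this theorem (to Beidleman and Djagba--Howell) and gives no proof of its own, I am judging your argument on its merits. The ``if'' direction and the construction of the coordinate maps $\varphi_i$ are sound: the verifications that $Ru_i$ and $K_i=\{r\in R: ru_i=0\}$ are $R$-subgroups are exactly right, and your observation that zero-symmetry is what supplies $s\cdot 0_M=0_M$ (via $s\cdot 0_M=s(0_R u_i)=(s\,0_R)u_i=0_R u_i=0_M$) is the correct fix at that point.

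The genuine gap is the final sentence, ``combining the $\varphi_i$ coordinatewise yields the desired isomorphism.'' The map $\Phi(r_1,\dots,r_n)=r_1u_1+\cdots+r_nu_n$ must satisfy $\Phi(r\cdot(r_1,\dots,r_n))=r\,\Phi(r_1,\dots,r_n)$, i.e.
\[
r\bigl(r_1u_1+\cdots+r_nu_n\bigr)=r(r_1u_1)+\cdots+r(r_nu_n),
\]
and this is precisely the left distributivity over module addition that a near-ring module does \emph{not} have; the available axiom $(r+s)m=rm+sm$ does not help. (Additivity of $\Phi$ likewise needs elements of distinct $M_i$ to commute, i.e.\ the summands to be normal in $(M,+)$.) Establishing this restricted distributivity for elements lying in distinct direct summands is the real content of the hard direction --- it is the analogue of the computation in the paper's Proposition~\ref{rp}, which requires the summands to be $R$-ideals rather than mere submodules --- so as written the ``only if'' direction is incomplete. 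A second, smaller issue: you invoke $1\cdot u_i=u_i$ to rule out $Ru_i=\{0\}$, but unitality is not among the paper's module axioms; without it (or some faithfulness/monogenicity hypothesis on each summand) a summand with trivial $R$-action and no proper subgroups, e.g.\ $\mathbb{Z}_p$ with $rm=0$ for all $r$, satisfies the stated definition of a one-dimensional near-vector space yet is not isomorphic to $R$, so that hypothesis needs to be made explicit before your contradiction goes through.
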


		In \cite{djagbahowell18, djagbathesis,djagbaprins, djagba}, $R$-submodules of finite dimensional
		near vector spaces have been classified using
		the Expanded Gaussian  Elimination ($\EGE$) algorithm.
		This algorithm is used to construct the smallest  $R$-submodules containing
		given finite set of vectors.
		(Such  $R$-submodules exist since any intersection of  $R$-submodules is 
		a  $R$-submodule.)
		
		\begin{defn}
			Let $V$ be a set of vectors. Define $\gen(V)$ to be the intersection of all $R$-submodules containing $V$.
		\end{defn}

		Let $M_R$ be a near-ring module. Let $V\subseteq M_R$ and let $T$ be an $R$-subgroup of $M_R$.
		\begin{defn}[Seed set]
			We say that \emph{$V$ generates $T$} if $\gen(V)=T$.
			In that case we say that $V$
			is a \emph{seed set} of $T$. 
			We also define the \emph{seed number} $\seed(T)$
			to be the cardinality of a smallest seed set of $T$.
		\end{defn}
		
		In \cite{djagbahowell18} it was proved that each $R$-subgroup is
		a direct sum of modules $u_iR$ of a special kind:
		
		\begin{thm}[Theorem~5.12 in~\cite{djagbahowell18}]
			\label{thm:ege}
			Let $R$ be a proper near-field
			and $\{v_1,\ldots,v_k\}$ be vectors in $R^n$. Then,
			$\gen(v_1,\ldots,v_k)=\bigoplus_{i=1}^\ell Ru_i$,
			where the $u_i$ are rows of some matrix $U=(u_{ij})\in R^{\ell\times n}$
			such that each of its columns has at most one non-zero entry.
		\end{thm}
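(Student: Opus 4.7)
The plan is to prove this by running an explicit elimination algorithm (the $\EGE$ procedure referenced just above the theorem) and then reading off the direct-sum decomposition from the combinatorial shape of the resulting matrix. First I would enumerate the row operations on the matrix with rows $v_1,\ldots,v_k$ that preserve $\gen(v_1,\ldots,v_k)$: left-multiplying a row by a nonzero scalar (invertible because $R$ is a near-field), swapping two rows, deleting a zero row, and replacing a row $w$ by $w + w'$ whenever $w'$ already lies in $\gen$ of the other current rows. Each such move leaves $\gen$ unchanged, so it suffices to transform the starting matrix into one whose columns each contain at most one nonzero entry.

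I would then proceed column by column. For the leftmost column with a nonzero entry, I would normalize that entry to $1$ via a left scaling, move the corresponding row to a pivot position, and for every other row $w$ with nonzero entry $w_j$ in the pivot column replace $w$ by $w - w_j u$, where $u$ is the pivot row; this zeroes the pivot column in $w$ while keeping $w$ inside $\gen$. The main technical obstacle is precisely that $R$ is a \emph{proper} near-field, so left distributivity fails: the subtraction $w - w_j u$ can re-create or perturb nonzero entries in previously processed columns, and a naive recursion need not terminate. The expanded step of $\EGE$ accounts for this by appending additional ``defect'' rows of the form $r(a+b)v - rav - rbv$, which automatically lie in $\gen$, and feeding them back into the pivoting loop. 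The nontrivial claim requiring real work is that after finitely many such rounds every column does end up with at most one nonzero entry; I would establish this by designing a well-founded invariant on the current matrix (for instance, the multiset of row supports ordered lexicographically, coupled with the number of already-assigned pivot columns) and checking that each round strictly decreases this invariant. The properness assumption is used precisely to guarantee that each defect row produced has strictly smaller support than its parent, which is what drives the termination.

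Finally, the direct-sum conclusion is immediate from the output shape. Let $u_1,\ldots,u_\ell$ denote the rows of $U$ and $S_i$ the support of $u_i$; by construction the $S_i$ are pairwise disjoint. If $\sum_{i=1}^\ell r_i u_i = 0$ in $R^n$, then for any index $i$ one can pick a coordinate $j \in S_i$ at which no other $u_{i'}$ contributes; the $j$-th coordinate equation reduces to $r_i u_{ij} = 0$ with $u_{ij}\ne 0$, forcing $r_i = 0$ because $R$ is a near-field. Combined with the preservation of $\gen$ under every allowed row operation, this yields $\gen(v_1,\ldots,v_k)=\bigoplus_{i=1}^\ell Ru_i$, as claimed.
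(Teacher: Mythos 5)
First, a point of reference: the paper does not prove this statement at all --- it is imported verbatim as Theorem~5.12 of \cite{djagbahowell18}, with only a one-line remark that it ``was proved by analyzing an explicit procedure termed Expanded Gaussian Elimination.'' So there is no in-paper proof to compare against; your proposal has to stand on its own as a reconstruction of the EGE argument.

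As such a reconstruction it has the right overall shape (gen-preserving row operations, pivoting, extra rows generated by the failure of left distributivity, disjoint supports giving directness), and your final paragraph correctly derives the direct-sum property from disjointness of supports. But the decisive step --- that the expansion process terminates with every column containing at most one nonzero entry --- is only promised (``I would establish this by designing a well-founded invariant \dots and checking''), and the one concrete mechanism you offer for it is wrong as stated. A defect row such as $r(a+b)v - rav - rbv$ has $j$-th component $r(av_j+bv_j)-r(av_j)-r(bv_j)$, which vanishes wherever $v_j=0$ but need not vanish anywhere else; its support is therefore only \emph{contained in} that of $v$, not strictly smaller, and properness of $R$ plays the opposite role from the one you assign it: properness is what makes such defect rows \emph{nonzero} in the first place (over a skew-field they all vanish and ordinary Gaussian elimination suffices). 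The genuine source of support shrinkage in EGE is elsewhere --- e.g.\ a defect of the form $r(v+w)-rv-rw$ is supported on $\supp(v)\cap\supp(w)$, and pivot coordinates are killed by the elimination step --- and organizing these facts into a terminating procedure is precisely the content of the cited theorem. You should also make explicit that $\bigoplus_i Ru_i$ is itself an $R$-submodule (closure under addition and under left multiplication both use the disjointness of the supports and right distributivity componentwise), since without that you only get one inclusion of the claimed equality. As it stands, the proposal is a plausible plan whose hardest step is asserted rather than proved.
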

		
		In particular, basis vectors $u_1,\ldots,u_\ell$ from
		\Cref{thm:ege} have mutually disjoint supports.
		\Cref{thm:ege} was proved by analyzing an explicit procedure
		termed ``Expanded Gaussian Elimination (EGE)''. This procedure
		takes $v_1,\ldots,v_k$ and outputs $u_1,\ldots,u_\ell$.
		Later on we will make use of the following corollary:

		\begin{thm}[Corollary~15 in~\cite{djagbajan}]
  \label{thm:ege1}
			Let $R$ be a proper near-field and $T\subseteq R^n$. Then,
			$T$ is an  $R$-submodule if and only if $T=\bigoplus_{i=1}^\ell Ru_i$
			for some nonzero vectors $u_1,\ldots,u_\ell$ with mutually disjoint supports.
		\end{thm}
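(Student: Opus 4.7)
The statement is an equivalence, and I would handle the two directions separately, reducing the forward direction to \Cref{thm:ege} via a finite-generation argument.

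For $(\Leftarrow)$, suppose $T=\bigoplus_{i=1}^\ell Ru_i$ with the $u_i$ having mutually disjoint supports. The plan is to verify closure under $+$ and under the scalar action. Given $v=\sum_i r_i u_i$ and $w=\sum_i r'_i u_i$ in $T$, disjointness of supports means that at every coordinate at most one of the summands $r_j u_j$ and $r'_j u_j$ is nonzero, so $v+w$ can be computed coordinatewise and (using that the additive group of a near-field is abelian) equals $\sum_i (r_i+r'_i)u_i\in T$. For $r\in R$, scalar multiplication on $R^n$ is coordinatewise; at a coordinate $k\in\supp(u_i)$ only the $i$-th summand contributes to $v$, so $(rv)_k = r(r_i u_{i,k})= (rr_i)u_{i,k}$ by associativity of the module action, giving $rv=\sum_i (rr_i)u_i\in T$. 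Disjointness of supports is precisely what bypasses the missing left distributivity in $R$.

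For $(\Rightarrow)$, suppose $T$ is an $R$-submodule of $R^n$. The plan rests on a key lemma: if $T_1\subseteq T_2$ are finitely generated $R$-submodules of $R^n$, written (by \Cref{thm:ege}) as $T_i=\bigoplus_j Ru^{(i)}_j$ with disjoint-support bases of sizes $\ell_1$ and $\ell_2$, then $\ell_1\le\ell_2$, with equality forcing $T_1=T_2$. To prove this, I would write each $u^{(1)}_i=\sum_j r_{ij}u^{(2)}_j$; since $R$ is a near-field and hence has no zero divisors, $\supp(u^{(1)}_i)$ is the disjoint union of the $\supp(u^{(2)}_j)$ over $j$ with $r_{ij}\ne 0$, and disjointness of the $\supp(u^{(1)}_i)$ forces distinct $i$'s to use disjoint sets of $j$'s. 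This gives $\ell_1\le\ell_2$, and in the equality case each $u^{(1)}_i$ is a nonzero scalar multiple of a unique $u^{(2)}_{\sigma(i)}$; the near-field identity $Rr=R$ for $r\ne 0$ then yields $Ru^{(1)}_i=Ru^{(2)}_{\sigma(i)}$, hence $T_1=T_2$.

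With the lemma I would finish as follows. If $T$ were not finitely generated, I could choose $v_1,v_2,\ldots\in T$ with each $v_{k+1}\notin\gen(v_1,\ldots,v_k)$, producing a strictly ascending chain $\gen(v_1)\subsetneq\gen(v_1,v_2)\subsetneq\cdots$ of finitely generated $R$-submodules of $T$; the lemma forces the parameters $\ell_k$ to strictly increase, contradicting $\ell_k\le n$ (the supports are disjoint, nonempty subsets of $\{1,\ldots,n\}$). Hence $T=\gen(v_1,\ldots,v_K)$ for some finite $K$, and \Cref{thm:ege} delivers the desired decomposition $T=\bigoplus_{i=1}^\ell Ru_i$. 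The main obstacle is the lemma itself: it is the place where $R$ being a near-field is indispensable, through the two facts that nonzero elements are multiplicatively invertible (so $Rr=R$) and that products of nonzero elements are nonzero (so $\supp(ru)=\supp(u)$ for $r\ne 0$).
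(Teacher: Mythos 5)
The paper does not actually prove this statement: it is imported verbatim as Corollary~15 of~\cite{djagbajan} (and restated later as Proposition~\ref{rp}, whose proof only addresses the supplementary commutativity clause), so there is no in-paper argument to compare against. Your proposal is therefore a standalone derivation, and it is correct. The reverse implication is routine and you identify the right mechanism: disjointness of supports lets $rv$ be computed one coordinate at a time, which is exactly what compensates for the missing left distributive law; closure under addition, by contrast, needs only right distributivity of the module action together with commutativity of $(R,+)$, so that part does not really use disjoint supports. The forward implication is where the content lies, and your reduction to the EGE theorem (Theorem~5.12 of~\cite{djagbahowell18}) is sound: the key lemma comparing two disjoint-support decompositions works because a near-field has no zero divisors (so $\supp(ru)=\supp(u)$ for $r\ne 0$, making $\supp(u^{(1)}_i)$ a disjoint union of the $\supp(u^{(2)}_j)$ with $r_{ij}\ne 0$) and satisfies $Rr=R$ for $r\ne 0$ (so a singleton index set gives $Ru^{(1)}_i=Ru^{(2)}_{\sigma(i)}$); this yields $\ell_1\le\ell_2$ with equality only when $T_1=T_2$, and combined with the bound $\ell\le n$ it gives the ascending chain condition, hence finite generation, after which the EGE theorem delivers the decomposition. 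Only cosmetic points remain: allow $\ell=0$ for $T=\{0\}$, and note explicitly that the expressions $u^{(1)}_i=\sum_j r_{ij}u^{(2)}_j$ exist simply because $u^{(1)}_i\in T_1\subseteq T_2=\bigoplus_j Ru^{(2)}_j$.
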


\begin{thm}[Corollary~6.3 in~\cite{djagbahowell18}]
			\label{thm:ege}
The subspaces (or $R$-ideals) of $R^n$ are all of the form $ S_1 \times S_2 \times \cdots \times S_n$ where $S_i= \{ 0 \}$ or $S_i=R$ for $i=1, \ldots,n$.
\label{th4}
\end{thm}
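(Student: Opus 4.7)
The plan is to characterise $R$-ideals of $R^n$ by combining the $R$-submodule structure theorem (\Cref{thm:ege1}) with the extra ideal condition $r(m+n)-rm\in N$, exploiting the fact that $R$ is a \emph{proper} near-field (so distributivity on the left fails somewhere).

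First I would handle the easy direction. Given any $N=S_1\times\cdots\times S_n$ with $S_i\in\{\{0\},R\}$, the subgroup condition is immediate (and normality is automatic since the additive group of a near-field is abelian, hence so is $R^n$). For the ideal condition, scalar multiplication in $R^n$ is coordinate-wise, so $r(m+n)-rm$ is the vector whose $i$-th entry is $r(m_i+n_i)-rm_i$; this lies in $S_i$ for each $i$ (trivially if $S_i=\{0\}$, and tautologically if $S_i=R$).

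For the harder direction, let $N\lhd_R R^n$. Since every $R$-ideal is an $R$-submodule, \Cref{thm:ege1} gives $N=\bigoplus_{i=1}^\ell Ru_i$ for nonzero vectors $u_1,\ldots,u_\ell$ with mutually disjoint supports. The key claim is that each $u_i$ has support of size one. Suppose for contradiction some $u_i$ has two nonzero coordinates $a$ in position $j$ and $b$ in position $k$ ($j\ne k$). Take $m=ce_j$ for an arbitrary $c\in R$; then $m+u_i$ has $c+a$ in position $j$, $b$ in position $k$, and agrees with $u_i$ elsewhere. The vector $r(m+u_i)-rm$ must lie in $N$. Because it is supported inside $\supp(u_i)$ and the $u_{i'}$ have mutually disjoint supports, it must actually lie in $Ru_i$, i.e.\ equal $su_i$ for some $s\in R$. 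Comparing the $k$-th coordinate gives $rb=sb$, and since $R$ is a near-field with $b\ne 0$ this forces $s=r$. Comparing the $j$-th coordinate then yields $r(c+a)-rc=ra$, i.e.\ $r(c+a)=rc+ra$, for all $r,c\in R$. This contradicts the assumption that $R$ is a \emph{proper} near-field.

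Hence each $u_i=c_ie_{j_i}$ for some $c_i\in R^*$ and coordinate $j_i$, and the disjoint-support condition forces the $j_i$ to be distinct. Since $R^*$ is a multiplicative group, $Rc_i=R$, so $Ru_i=Re_{j_i}$. Therefore $N=\bigoplus_i Re_{j_i}=S_1\times\cdots\times S_n$, where $S_i=R$ if $i\in\{j_1,\ldots,j_\ell\}$ and $S_i=\{0\}$ otherwise. The main obstacle is the intermediate contradiction step: one must carefully choose the test element $m=ce_j$ so that the ideal axiom produces a vector forced to be proportional to $u_i$, and then read off the failure of left distributivity — this is precisely where the hypothesis that $R$ is a \emph{proper} near-field (not merely a near-field) is used.
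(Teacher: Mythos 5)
The paper does not actually prove this statement; it is imported verbatim as Corollary~6.3 of the cited reference, so your argument has to stand on its own. Your overall strategy is sound: reduce to the submodule decomposition $N=\bigoplus_{i}Ru_i$ with mutually disjoint supports and use the ideal axiom $r(m+n)-rm\in N$ to rule out a generator $u_i$ with two nonzero coordinates. The easy direction, the step forcing $r(m+u_i)-rm$ into $Ru_i$ (which correctly uses that a near-field has no zero divisors), and the identification $s=r$ from the $k$-th coordinate via right distributivity all check out. You are also right that properness is where the content lies: for a skew-field the diagonal of $R^2$ is an $R$-ideal not of product form, so this hypothesis (omitted in the statement as printed) is genuinely needed.

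There is, however, one genuine gap at the punchline. What you actually derive is that for the \emph{fixed} element $a=(u_i)_j$ one has $r(c+a)=rc+ra$ for all $r,c\in R$. Properness only asserts that left distributivity fails for \emph{some} triple; it does not assert that every fixed nonzero $a$ admits $r,c$ with $r(c+a)\ne rc+ra$. So the sentence ``this contradicts properness'' does not follow as written. The gap closes in either of two short ways. First, run the same computation with $n=su_i\in N$ for arbitrary $s\in R$ instead of $n=u_i$: the $k$-th coordinate gives $t=rs$, and the $j$-th coordinate gives $r(c+sa)=rc+r(sa)$; since $sa$ sweeps all of $R$ as $s$ varies (because $a\ne 0$ and $R^{*}$ is a group), $R$ is left distributive, contradicting properness. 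Alternatively, prove the one-line lemma that $D=\{a\in R: r(c+a)=rc+ra \text{ for all } r,c\in R\}$ is closed under right multiplication: writing $c+ab=(cb^{-1}+a)b$ and using associativity together with right distributivity gives $r(c+ab)=\bigl(r(cb^{-1})+ra\bigr)b=rc+r(ab)$, so $D\ne\{0\}$ forces $D=aR=R$. With either repair the proof is complete.
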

  \subsection{Prime near-ring modules}
Let's denote  $\widetilde{P}=(P:M)_R= \{ r \in R: rM \subseteq P\}$.
  \begin{lem} (\cite{Juglal2008})
      If $P \lhd_R M$, then $\widetilde{P}$ is an ideal of $R.$
  \end{lem}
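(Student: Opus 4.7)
The plan is to verify the three defining conditions of an ideal of a near-ring for $\widetilde{P}=(P:M)_R$, using only the two module axioms $(r_1+r_2)m = r_1m + r_2m$ and $(r_1r_2)m = r_1(r_2m)$, together with the hypothesis that $P$ is an $R$-ideal of $M$ (so $(P,+) \trianglelefteq (M,+)$ and $r(m+p) - rm \in P$ for every $r \in R$, $m \in M$, $p \in P$). Throughout, a typical element $r \in \widetilde{P}$ means $rM \subseteq P$, and the strategy is to test membership in $\widetilde{P}$ by evaluating at an arbitrary $m \in M$.

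First I would check that $\widetilde{P}$ is a normal subgroup of $(R,+)$. Closure follows from $(r_1+r_2)m = r_1m + r_2m \in P$, inverses from the fact that $rm + (-r)m = 0$ forces $(-r)m = -(rm) \in P$, and $0 \in \widetilde{P}$ is immediate since $R$ is zero-symmetric. Normality is the first place where the $R$-ideal property of $P$ is used: for $s \in R$ and $r \in \widetilde{P}$, the identity $(s+r-s)m = sm + rm - sm$ lies in $P$ because $rm \in P$ and $(P,+)$ is normal in $(M,+)$.

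Next, right absorption $\widetilde{P}R \subseteq \widetilde{P}$ is essentially a one-liner: for $r \in \widetilde{P}$ and $s \in R$, one has $(rs)m = r(sm) \in rM \subseteq P$, so $rs \in \widetilde{P}$. The main step, and the one I expect to be the actual content of the lemma, is condition (c): for $r_1, r_2 \in R$ and $i \in \widetilde{P}$, showing $r_1(r_2+i) - r_1 r_2 \in \widetilde{P}$. Applied to $m \in M$, the module axioms give
\[
\bigl(r_1(r_2+i) - r_1r_2\bigr)m \;=\; r_1\bigl((r_2+i)m\bigr) - r_1(r_2m) \;=\; r_1(r_2m + im) - r_1(r_2m).
\]
Setting $m' := r_2 m \in M$ and $n := im \in P$ (using $i \in \widetilde{P}$), this becomes $r_1(m' + n) - r_1 m'$, which lies in $P$ precisely by the $R$-ideal condition on $P$.

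The only mild obstacle is bookkeeping: one has to be careful that the failure of left distributivity in near-rings means we cannot freely expand $r_1(r_2+i)$; instead, we must keep it as a single element of $R$ and only expand after applying it to $m$, at which point the $R$-module axiom $(a+b)m = am + bm$ (not $r(a+b) = ra + rb$!) lets us split things up, and the $R$-ideal hypothesis on $P$ is invoked in exactly the form in which it was stated.
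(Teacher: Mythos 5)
Your proof is correct; the paper itself gives no argument for this lemma (it is quoted from Juglal's thesis), and your direct element-wise verification of the three ideal axioms --- in particular keeping $r_1(r_2+i)$ unexpanded until it acts on $m\in M$ and then invoking $(a+b)m=am+bm$ together with the $R$-ideal condition $r(m'+n)-rm'\in P$ --- is exactly the standard argument one would expect there. One trivial remark: $0\in\widetilde{P}$ follows from $0\cdot m=0$, which is forced by the module axiom $(0+0)m=0m+0m$ rather than by zero-symmetry of $R$.
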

		  
  		\begin{defn}[ \cite{Juglal2008}] Let $P \lhd R $.  Then $P$ is called:
      
   \begin{itemize}
       \item[(a)]  $0$-prime if for all ideals $A,B$ of $R,$ $AB \subseteq P$ implies $A \subseteq P$ or $B\subseteq P.$
       \item[(b)]  $2$-prime if for all left $R$-subgroups $A,B$ of $R,$ $AB \subseteq P$ implies $A \subseteq P$ or $B\subseteq P.$
              \item[(c)]  $3$-prime if for all $a,b \in R,$ and $aRb \subseteq P$ implies $a \in P$ or $b \in P.$ 
\item[(d)]  (completely) $c$-prime if for all $a,b \in R,$ and $aRb \subseteq P$ implies $a \in P$ or $b \in P.$ 
   \end{itemize}
		\end{defn}

Dauns defined $v$-prime as follows:
  		  
  		\begin{defn}[Dauns \cite{Dauns1978}]
    
    \label{dauns} Let $P \lhd_R M $ such that $RM \nsubseteqq P$.  Then $P$ is called:
      
   \begin{itemize}
       \item[(a)]  $0$-prime (Dauns prime) if  $AB \subseteq P$ implies $A M \subseteq P$ or $B\subseteq P$ for all ideals $A$ of $R$ and for all $R$-submodules, $B$ of $M.$ 
       \item[(b)] $2$-prime if  $AB \subseteq P$ implies $A M \subseteq P$ or $B\subseteq P$ for all left $R$-subgroups $A$ of $R$ and for all $R$-submodules, $B$ of $M.$ 
              \item[(c)]  $3$-prime if  $aRm \subseteq P$ implies $a M \subseteq P$ or $m \in P$ for all $a \in R$ and $m \in M.$
 \item[(d)] (completely) $c$-prime  if  $rm \in P$ implies $r M \subseteq P$ or $m \in P$ for all $r \in R$ and $m \in M.$
   \end{itemize}
		\end{defn}
  In \cite{juglal2008prime} S. Juglal defined $v$-prime as follows
  		\begin{defn}[ \cite{Juglal2008}] 
    \label{suresh}
    Let $P \lhd_R M $ such that $RM \nsubseteqq P$.  Then $P$ is called:
      
   \begin{itemize}
       \item[(a)]  $0$-prime  if  $AB \subseteq P$ implies $A M \subseteq P$ or $B\subseteq P$ for all ideals $A$ of $R$ and for all $R$-ideals, $B$ of $M.$ 
       \item[(b)] $2$-prime  if  $AB \subseteq P$ implies $A M \subseteq P$ or $B\subseteq P$ for all left $R$-subgroups $A$ of $R$ and for all $R$-submodules, $B$ of $M.$ 
              \item[(c)]  $3$-prime  if  $aRm \subseteq P$ implies $a M \subseteq P$ or $m \in P$ for all $a \in R$ and $m \in M.$
 \item[(d)] (completely)  $c$-prime  if  $rm \in P$ implies $r M \subseteq P$ or $m \in P$ for all $r \in R$ and $m \in M.$

   \end{itemize}
		\end{defn}

  Note that Definition \ref{suresh} implies Definition \ref{dauns}. But in general, Daun's definition does not necessarily imply S. Juglal's definition since every $R$-submodule of $M$ is not always an $R$-ideal. In general, it is true that every $R$-ideal of $M$ is an $R$-submodule. 
  \begin{lem}
      Let $P \lhd_R M$ such that $RM \nsubseteq P$ and let $v=0,2,3,c.$ If $P$ is a $v$-prime $R$-ideal of $M,$ then $\widetilde{P}$ is a $v$-prime ideal of $R.$
  \end{lem}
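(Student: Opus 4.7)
The plan is to first verify that $\widetilde{P}$ is a proper ideal of $R$: the previous lemma already gives that $\widetilde{P}$ is an ideal, and the standing hypothesis $RM \nsubseteq P$ supplies some $r \in R$ with $rM \nsubseteq P$, hence $r \notin \widetilde{P}$, so $\widetilde{P} \neq R$. After that, I would handle the four cases $v \in \{0,2,3,c\}$ one at a time. The recurring template is to translate the product condition on $R$ into an action condition on $M$, apply the $v$-primeness of $P$, and exploit that the disjunct ``$aM \subseteq P$'' does not depend on any particular $m \in M$, so uniformity in $m$ collapses the pointwise dichotomy to the desired set-wise one.

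For $v = c$ and $v = 3$ the argument is immediate. If $ab \in \widetilde{P}$ (respectively $aRb \subseteq \widetilde{P}$), then for each $m \in M$ we have $a(bm) \in P$ (respectively $aR(bm) \subseteq P$); the $c$- or $3$-primeness of $P$ then gives $aM \subseteq P$ or $bm \in P$, and if the first alternative fails, it fails uniformly, so $bM \subseteq P$ and $b \in \widetilde{P}$. For $v = 2$, with $A,B$ left $R$-subgroups of $R$ and $AB \subseteq \widetilde{P}$, I would fix $m \in M$ and observe that $Bm$ is an $R$-submodule of $M$: the additive closure uses the module right-distributivity $(b_1 + b_2)m = b_1 m + b_2 m$, and scalar closure uses $rB \subseteq B$. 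Then $A(Bm) \subseteq P$ combined with the $2$-primeness of $P$ produces $AM \subseteq P$ or $Bm \subseteq P$, and the same uniformity-in-$m$ step finishes the case.

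The $v = 0$ case is the delicate one and is where I expect the main obstacle. Here $A, B$ are ideals of $R$, and the natural candidate $BM$ need not be an $R$-ideal of $M$, so I would let $N$ denote the $R$-ideal of $M$ generated by $BM$ and reduce the goal to the key claim $AN \subseteq P$; granted this, the $0$-primeness of $P$ applied to $A$ and $N$ gives $AM \subseteq P$ or $N \subseteq P$, and the latter forces $BM \subseteq P$, i.e.\ $B \subseteq \widetilde{P}$. To establish $AN \subseteq P$ I would induct on the closure operations that build $N$ from $BM$ (negation, addition, conjugation $m' + \cdot - m'$, and the ideal twist $r(m' + \cdot) - rm'$). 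The base case $A \cdot BM \subseteq ABM \subseteq P$ is immediate; the inductive steps must absorb the error terms introduced by the failure of left-distributivity ($a(n_1 + n_2) \neq an_1 + an_2$ in general), and for this I would lean on the compatibility between $A$ being an ideal of $R$ (so $r(r' + a) - rr' \in A$) and $P$ being an $R$-ideal of $M$ (so $r(m' + p) - rm' \in P$). The obstacle is precisely that the naive annihilator set $\{m \in M : Am \subseteq P\}$ is not a subgroup in the absence of left-distributivity, so one cannot shortcut by taking its $R$-ideal hull; the careful inductive tracking of $AN \subseteq P$ through the closure of $BM$ is where the weight of the argument lies, the other three cases being essentially direct specializations of the same translation trick.
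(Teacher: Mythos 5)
Your treatment of $v=2,3,c$ is correct and coincides with the paper's proof: translate the hypothesis $AB\subseteq\widetilde{P}$ (resp.\ $xRy\subseteq\widetilde{P}$, $xy\in\widetilde{P}$) into a pointwise condition on $M$, apply the $v$-primeness of $P$ to the $R$-submodule $Bm$ (resp.\ to the element $ym$), and use the fact that the disjunct $AM\subseteq P$ does not depend on $m$. Your opening remark that $RM\nsubseteq P$ forces $\widetilde{P}\neq R$ is a detail the paper omits.

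The $v=0$ case, however, contains a genuine gap, and it is one you create for yourself. You pass to the $R$-ideal $N$ of $M$ generated by $BM$ and propose to prove $AN\subseteq P$ by induction on the closure operations generating $N$. That induction does not close: already at the additive step you would need $A(n_1+n_2)\subseteq P$ given $An_1\subseteq P$ and $An_2\subseteq P$, and the only available tool, the $R$-ideal condition $r(m+p)-rm\in P$, requires one of the summands to lie in $P$, which $n_1,n_2$ need not. This is precisely the failure you yourself flag when you observe that $\{m\in M: Am\subseteq P\}$ need not be a subgroup; ``tracking $AN\subseteq P$ through the closure of $BM$'' is the same computation and breaks for the same reason, so the obstacle you defer is not overcome. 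The detour is also unnecessary: the paper's proof handles $v=0$ by the identical pointwise trick you already use for $v=2$. For each $m\in M$ the set $Bm$ is an $R$-submodule of $M$ (additively closed by right distributivity, and $R(Bm)=(RB)m\subseteq Bm$ since $B$ is an ideal), and $A(Bm)\subseteq ABM\subseteq P$; applying $0$-primeness of $P$ to the pair $(A,Bm)$ gives $AM\subseteq P$ or $Bm\subseteq P$, and if the former fails it fails for every $m$, whence $BM\subseteq P$ and $B\subseteq\widetilde{P}$. This step uses the version of $0$-primeness whose second argument ranges over $R$-submodules (Definition~\ref{dauns}), which is the definition the paper's own proof invokes; if one insists on quantifying only over $R$-ideals of $M$, neither your induction nor the paper's argument applies as written, and the correct repair is in any case to work with the submodule $Bm$, not with the ideal hull of $BM$.
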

  \begin{proof}~
  \begin{itemize}
    \item $v=0:$ Let $A,B$ be ideals of $R$ such that $AB \subseteq \widetilde{P}.$ Then $ABM \subseteq P,$ so that for all $m \in M, A(Bm) \subseteq P.$ Since $P$ is a $0$-prime $R$-ideal and $Bm$ is an $R$-submodule of $M,$ we have that $AM \subseteq P$ or $Bm \subseteq P$  for all $m \in M.$ If $AM \subseteq P,$ then $A \subseteq (P:M)=\widetilde{P}$ and we are done. If $Bm \subseteq P$ for all $m \in M,$ then $BM \subseteq P$ and hence $B \subseteq (P:M)=\widetilde{P}.$ Therefore, $\widetilde{P}$ is a $0$-prime ideal of $R.$
      \item $v=2:$ Similar to the previous case.
      \item  $v=3:$ Let $x,y \in R$ such that $xRy \subseteq \widetilde{P}.$ Suppose that $y \notin \widetilde{P}.$ Then $yM \nsubseteq P$ implies that there exists an $m \in M$ such that $ym \notin P.$ Now $xRy \subseteq \widetilde{P}$ implies that $xRym \subseteq P.$ Since $P$ is a $3$-prime $R$-ideal and $ym \notin P,$ we must have that $xM \subseteq P$ ie. $x \in \widetilde{P}.$ Thus $\widetilde{P}$ is a $3$-prime ideal of $R.$
  \end{itemize}

  \end{proof}
		\section{Classical prime near-ring module}

To extend the idea of classical primeness to near-ring module,  we now introduce the notion of classical prime ideals as follows

 		\begin{defn}[] Let $P \lhd R $.  Then $P$ is called:
      
   \begin{itemize}
       \item[(a)]  $0$-classical prime  if for all ideals $A,B$ of $R,$ and for all ideals $I$ of $R,$ $ABI \subseteq P$ implies $A I \subseteq P$ or $B I \subseteq P.$
       \item[(b)]  $2$-classical prime  if for all left $R$-subgroups $A,B$ of $R,$ and for all ideals $I$ of $R,$ $ABI \subseteq P$ implies $A I \subseteq P$ or $B I \subseteq P.$
              \item[(c)]  $3$- classical prime  if   $(aR)(bR)I \subseteq P$ implies $aI \in P$ or $b I\in P$ for all $a,b \in R,$ and
and for all ideals $I$ of $R.$
         \item[(d)] (completely)  $c$-classical prime  if   $(aRb)I \subseteq P$ implies $aI \in P$ or $b I\in P$ for all $a,b \in R,$ and
and for all ideals $I$ of $R.$
   \end{itemize}
		\end{defn}
We also introduce the concept of classical prime near-ring module as follow

  		\begin{defn}[]
    \label{new}
    Let $P \lhd_R M $ such that $RM \nsubseteq P$.  Then $P$ is called:
      
   \begin{itemize}
       \item[(a)]  $0$-classical prime  if  $ABN \subseteq P$ implies $A N \subseteq P$ or $BN \subseteq P$ for all ideals $A, B$ of $R$ and for all $R$-submodules, $N$ of $M.$ 
       \item[(b)] $2$-classical prime  if  $ABN \subseteq P$ implies $A N \subseteq P$ or $BN \subseteq P$ for all left $R$-subgroups $A, B$ of $R$ and for all $R$-submodules, $N$ of $M.$ 
              \item[(c)]  $3$-classical prime  if  $(aR)(bR)N \subseteq P$ implies $a N \subseteq P$ or $bN \in P$ for all $a,b \in R$ and for all $R$-submodules, $N$ of $M.$ 

                \item[(c)] (completely)  $c$-classical prime  if  $(aRb)N \subseteq P$ implies $a N \subseteq P$ or $bN \in P$ for all $a,b \in R$ and for all $R$-submodules, $N$ of $M.$

   \end{itemize}
		\end{defn}

Let $P$ be a $v$-classical prime $R$-ideal of $M$ for $v=0,2,3,c$. Then, to simplify, we can say that $P$ is a $v$-classical prime.
  
  \begin{pro}Let $P \lhd_R M $ such that $RM \nsubseteq P$. Let $v=0,2,3,c$. If $P$ is $v$-classical prime $R$-ideal of $M$, then $\widetilde{P}$ is $v$-classical prime ideal of $R.$
  \end{pro}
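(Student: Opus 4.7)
The plan is to adapt a standard commutative-algebra template: in the ring setting, if $P$ is a classical prime submodule of $M$ and $ABI \subseteq (P:M)$ for ideals $A,B,I$ of $R$, one sets $N := IM$, observes $ABN \subseteq P$, applies classical primeness of $P$ to conclude $AN \subseteq P$ or $BN \subseteq P$, and recovers $AI \subseteq (P:M)$ or $BI \subseteq (P:M)$. The goal is to run this template in the near-ring setting for each of the four types $v \in \{0,2,3,c\}$, mirroring the structure of the $v$-prime lemma proved earlier in the paper, but now with a third structure $I$ entering the hypothesis.

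The four cases differ only in the type of objects $A,B$ used in the classical-primeness hypothesis on $\widetilde{P}$: two-sided ideals of $R$ for $v=0$; left $R$-subgroups for $v=2$; and the principal forms $aR,bR$ (for $v=3$) or $aRb$ (for $v=c$) where $a,b\in R$. In each case, the ideal $I$ of $R$ supplies the third structure. The common skeleton is: use $\widetilde{P}=(P:M)$ to rewrite the hypothesis $(\cdot)\,I\subseteq\widetilde{P}$ as $(\cdot)\,IM\subseteq P$; set $N:=IM$, viewed as an $R$-submodule of $M$; recognize $(\cdot)\,N\subseteq P$ as the hypothesis of the $v$-classical primeness of $P$; invoke this primeness to obtain $(\cdot_1)\,N\subseteq P$ or $(\cdot_2)\,N\subseteq P$; and translate back through $N=IM$ to $(\cdot_1)\,I\subseteq\widetilde{P}$ or $(\cdot_2)\,I\subseteq\widetilde{P}$. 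I would write out the $v=0$ case in full and then indicate that $v=2,3,c$ are obtained by substitution.

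The main obstacle is the behavior of $IM$ in the near-ring setting, where the failure of left distributivity prevents the naive subgroup generated by the products $\{im:i\in I,\,m\in M\}$ from automatically being an $R$-submodule. I would take $IM$ to denote the $R$-submodule of $M$ generated by this product set, so it is an $R$-submodule by construction; the real work is then to verify $ABN\subseteq P$ (and the corresponding containment in the other three cases). Elementary products $(ab)(im)=((ab)i)m$ land in $\widetilde{P}M\subseteq P$ via the associativity $(r_1 r_2)m=r_1(r_2 m)$ and the hypothesis $ABI\subseteq\widetilde{P}$, but extending this to the full $R$-submodule closure forces one to absorb distribution-error terms of the form $r(x_1+x_2)-rx_1$ using the $R$-ideal property $r(m+n)-rm\in P$ for $n\in P$, together with the normality of $P$ in $(M,+)$. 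This is the only nontrivial verification; once it is in hand, the four cases reduce to a clean, essentially formal, application of the above template.
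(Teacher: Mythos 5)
Your template is the natural one (and is essentially what the paper itself attempts), but the step you flag as ``the only nontrivial verification'' is exactly where the argument breaks, and the mechanism you propose for it does not work. If $N$ is the $R$-submodule of $M$ generated by the product set $\{im : i\in I,\, m\in M\}$, then $N$ contains elements such as $x_1+x_2$ and $r(x_1+x_2)$ with $x_1=i_1m_1$, $x_2=i_2m_2$, and to get $ABN\subseteq P$ you must show, for instance, that $(ab)(x_1+x_2)\in P$ knowing only that $(ab)x_1\in P$ and $(ab)x_2\in P$. The $R$-ideal property of $P$ gives $r(m+n)-rm\in P$ only for $n\in P$; here neither $x_1$ nor $x_2$ lies in $P$, so there is nothing against which to absorb the distribution error, and in a genuine near-ring module $(ab)(x_1+x_2)$ is simply not controlled by $(ab)x_1$ and $(ab)x_2$. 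So the containment $ABN\subseteq P$ does not follow, and with it the whole reduction collapses.

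The repair closest to your plan is to fix $m\in M$ and take $N=Im=\{im : i\in I\}$: this genuinely is an $R$-submodule, since right distributivity of the module action gives $i_1m-i_2m=(i_1-i_2)m$ and $RI\subseteq I$ (which holds for a two-sided ideal of a zero-symmetric near-ring) gives $r(im)=(ri)m\in Im$. Then $ABI\subseteq\widetilde{P}$ yields $AB(Im)\subseteq P$, hence $A(Im)\subseteq P$ or $B(Im)\subseteq P$ for each $m$. But now a uniformity problem appears: the alternative that holds may depend on $m$, while the conclusion requires $(AI)M\subseteq P$ or $(BI)M\subseteq P$ globally. In the paper's analogous lemma for $v$-prime ideals this is harmless because one horn of the dichotomy is $AM\subseteq P$, independent of $m$; in the classical-prime setting both horns depend on $m$, and the sets $\{m : (AI)m\subseteq P\}$ and $\{m : (BI)m\subseteq P\}$ are not obviously subgroups of $(M,+)$ (the same failure of left distributivity), so no ``a group is not the union of two proper subgroups'' argument is available. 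This second step needs a genuinely new idea; as written, your proposal --- much like the paper's own very terse proof, which likewise treats the product $IM$ (written there as $IR$) as if it were an $R$-submodule --- leaves the proposition unproved.
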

  \begin{proof}~
      \begin{itemize}
          \item $(v=2):$ Suppose $P$ is $2$-classical prime $R$-ideal of $R.$ Let $A, B$ be left $R$-subgroups of $R$ and $I$ ideal of $R.$ Suppose that $ABI \subseteq \widetilde{P}.$ It follows that $ABI R \subseteq \widetilde{P} \Longrightarrow AB (IR) \subseteq P \Longrightarrow ABI \subseteq P$ since $P$ is $2$-classical prime $R$-ideal of $R.$ Then we have $AI \subseteq P$ or $BI \subseteq P.$  It follows that $A(IR) \subseteq P \Longrightarrow (AI)R \subseteq P$ or $(BI)R \subseteq P$. Hence $AI \subseteq \widetilde{P}$ or $BI \subseteq \widetilde{P}$.
          \item The proof goes  similarly for $v=0,3,c$.
      \end{itemize}
  \end{proof}

  \begin{defn}
      $M$ is said to be $v=0,2,3,c$-classifcal prime $R$-module if $RM \ne 0$ and $\{0 \}$ is a $v$-classical prime
 $R$-ideal of $M$.
 \end{defn}

 \
		  \begin{pro}Let $P \lhd_R M $ such that $RM \nsubseteqq P$. For $v=0,2,3,c$ $\frac{M}{P}$ is $v$-classical prime $R$-module  if and only if $P$ $v$-classical prime ideal of $R$-ideal.
  \end{pro}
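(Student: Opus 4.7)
The plan is to invoke the standard quotient correspondence. Since $P\lhd_R M$, the quotient $M/P$ is a well-defined $R$-module, and the canonical projection $\pi:M\to M/P$ gives a bijection between $R$-submodules of $M$ containing $P$ and $R$-submodules of $M/P$, via $N\mapsto\pi(N)$ with inverse $\bar N\mapsto\pi^{-1}(\bar N)$. The crucial translation is: for any subset $S\subseteq R$ and any $R$-submodule $N\subseteq M$ with image $\bar N=\pi(N)$, one has $SN\subseteq P$ if and only if $S\bar N=\{\bar 0\}$, and similarly $Sm\subseteq P$ iff $S\bar m=\{\bar 0\}$ where $\bar m=m+P$. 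In particular the nondegeneracy clauses match: $RM\nsubseteq P$ is equivalent to $R(M/P)\neq\{\bar 0\}$, so this part of both definitions passes across the equivalence automatically.

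For the forward direction, assume $M/P$ is $v$-classical prime. A witness to the $v$-classical prime property of $P$ takes the form $ABN\subseteq P$ for $v=0,2$, or $(aR)(bR)N\subseteq P$ for $v=3$, or $(aRb)N\subseteq P$ for $v=c$, where $A,B$ have the appropriate type in $R$ (ideals; left $R$-subgroups; or produced from $a,b\in R$) and $N$ is an $R$-submodule of $M$. I would push $N$ forward to $\bar N=\pi(N)$, use the translation above to convert the witness into the corresponding inclusion in $M/P$, apply $v$-classical primeness of $\{\bar 0\}$ in $M/P$ to deduce $A\bar N=\{\bar 0\}$ or $B\bar N=\{\bar 0\}$ (and analogously for $v=3,c$), and then translate back to conclude $AN\subseteq P$ or $BN\subseteq P$, as required. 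The reverse implication is entirely symmetric: starting from an $R$-submodule $\bar N\subseteq M/P$ with a witnessing inclusion, I would lift by setting $N:=\pi^{-1}(\bar N)\supseteq P$, apply $v$-classical primeness of $P$ on this lifted witness, and push the conclusion forward along $\pi$.

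I do not expect any essential obstacle beyond keeping the four cases $v\in\{0,2,3,c\}$ straight. The only mild delicacy is that the set of coefficients in $R$ depends on $v$ -- it is $AB$, or $(aR)(bR)$, or $aRb$ -- but in every case this set lives entirely in $R$ and is unaffected by the module-side quotient, so the single translation $SN\subseteq P\Leftrightarrow S\bar N=\{\bar 0\}$ applies uniformly with the appropriate choice of $S$. No new near-ring phenomena intervene, because the hypothesis $P\lhd_R M$ is precisely what ensures that $\pi$ is an $R$-module homomorphism with well-behaved image and preimage on $R$-submodules; consequently the whole argument reduces to the correspondence theorem, and the main care needed is notational bookkeeping across the four flavours of classical primeness.
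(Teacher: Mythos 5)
Your proposal is correct and is essentially the paper's own argument: the paper also proves this by translating each inclusion $SN\subseteq P$ (with $S=AB$, $(aR)(bR)$, or $aRb$ as appropriate) into the corresponding statement $S\bar N=\{\bar 0\}$ in $M/P$ and reading the definitions back and forth across the quotient. Your write-up is in fact a bit more careful than the paper's, since you make explicit the submodule correspondence under $\pi$ and the matching of the nondegeneracy clauses, which the paper leaves implicit.
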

  \begin{proof}~
      \begin{itemize}
          \item{($v=0$):} $P$ is $0$-classical prime $\Longleftrightarrow$ for ideals, $A$ and $B$ of $R$, and $R$-submodule, $N$ of $M,$ such that $ABN \subseteq P,$ it follows that, $AN \subseteq P$ or $BN \subseteq P $ $\Longleftrightarrow \frac{ABN}{P} =0$  implies $\frac{AN}{P}=0$ or  $\frac{BN}{P}=0$ $\Longleftrightarrow \{0 \}$ is $0$-classical prime $R$-ideal of $\frac{M}{P} \Longleftrightarrow $ $\frac{M}{P}$ is $0$-classical prime $R$-module.
          \item{($v=2$):} Similar to the previous but by choosing $A$ and $B$ to be left $R$-subgroups of $R$.
          \item{($v=3$):} $P$ is $3$-classical prime $\Longleftrightarrow$ $(aR)(bR)N \subseteq P$ implies $a N \subseteq P$ or $bN \in P$ for all $a,b \in R$ and for all $R$-submodules, $N$ of $M $ $\Longleftrightarrow$ $\frac{(aR)(bR)N}{P}=0$ implies $\frac{aN}{P}=0$ or $\frac{aN}{P}=0$  $\Longleftrightarrow \{0 \}$ is $3$-classical prime $R$-ideal of $\frac{M}{P} \Longleftrightarrow $ $\frac{M}{P}$ is $3$-classical prime $R$-module.
          \item{($v=c$):} It follows similarly with $v=3.$
      \end{itemize}
  \end{proof}

    \begin{thm}     \label{classicalprime}
Let $M$ be an $R$-module and  $P \lhd_R M $. Consider the following:

      \begin{itemize}
          \item[(a)] $P$ is $c$-classical prime,
             \item[(b)] $P$ is $3$-classical prime,
                \item[(c)] $P$ is $2$-classical prime,
                   \item[(d)] $P$ is $0$-classical prime.
      \end{itemize}
      We have $(a) \Longrightarrow (b) \Longrightarrow  (c) \Longrightarrow  (d).   $
    
  \end{thm}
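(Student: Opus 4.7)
The plan is to prove the three implications $(a)\Rightarrow(b)$, $(b)\Rightarrow(c)$, $(c)\Rightarrow(d)$ one at a time, since they are of increasing subtlety. The common strategy is to show that the ``test family'' used in the stronger notion produces a configuration that falls under the defining axiom of the weaker notion, then invoke the hypothesis. Throughout, I will use only associativity of near-ring multiplication, compatibility of scalar multiplication, right distributivity, zero-symmetry of $R$, and the closure $RN\subseteq N$ that comes with $N$ being an $R$-submodule.

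The implication $(c)\Rightarrow(d)$ is immediate after observing that every (two-sided) ideal $I$ of $R$ is in particular a left $R$-subgroup. Indeed, for $r\in R$ and $i\in I$, the ideal axiom $r(r_2+i)-r r_2\in I$ specialized at $r_2=0$ together with zero-symmetry $r\cdot 0=0$ gives $ri\in I$, so $RI\subseteq I$. Hence any pair of ideals $A,B$ is admissible in the defining condition of $2$-classical primeness, which reduces that condition to the one for $0$-classical primeness. For $(b)\Rightarrow(c)$, take left $R$-subgroups $A,B$ with $ABN\subseteq P$, and fix arbitrary $a\in A$ and $b\in B$. A generic element of $(aR)(bR)N$ has the form $(ar)(br')n$ with $r,r'\in R$ and $n\in N$. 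Associativity rewrites it as $a(rb)(r'n)$; since $rb\in RB\subseteq B$ (because $B$ is a left $R$-subgroup) and $r'n\in RN\subseteq N$, this element lies in $A\cdot B\cdot N\subseteq P$. Hence $(aR)(bR)N\subseteq P$, and applying $3$-classical primeness yields $aN\subseteq P$ or $bN\subseteq P$ for every $(a,b)\in A\times B$. The usual dichotomy argument (if some $a_0\in A$ violates $a_0N\subseteq P$, then every $b\in B$ must satisfy $bN\subseteq P$) then gives $AN\subseteq P$ or $BN\subseteq P$.

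The main obstacle is $(a)\Rightarrow(b)$. Assuming $c$-classical primeness, given $(aR)(bR)N\subseteq P$, I want to feed $c$-classical primeness with a containment of the form $(a'Rb')N'\subseteq P$. Unwinding $(aR)(bR)N\subseteq P$ via associativity of multiplication and of the scalar action gives $arbr'n\in P$ for all $r,r'\in R$ and $n\in N$. Invoking the standing identity $1\in R$ (specified when needed, as in the abstract), choose $r'=1$: then $arb\cdot n=(arb)(1\cdot n)\in P$ for all $r\in R$ and $n\in N$, i.e.\ $(aRb)N\subseteq P$. Definition~\ref{new}(d) then delivers $aN\subseteq P$ or $bN\subseteq P$, which is exactly the conclusion of $3$-classical primeness.

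The delicate point in the last step is precisely the passage from $(aR)(bR)N$ to $(aRb)N$: without an identity element this inclusion can fail, and one must instead iterate $c$-classical primeness — apply it to the pair $(a,br)$ for each $r\in R$, using that $(aR(br))N\subseteq P$ follows from the relabeling $arbr'n\in P$, and then merge the resulting dichotomies. I expect that detour to be the only place in the proof where care is genuinely required; all the other containments are bookkeeping with associativity and the submodule closure $RN\subseteq N$.
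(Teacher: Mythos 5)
Your proofs of $(b)\Rightarrow(c)$ and $(c)\Rightarrow(d)$ are correct and follow the same route as the paper: for $(c)\Rightarrow(d)$ you check that every two-sided ideal is a left $R$-subgroup (the ideal axiom $r_1(r_2+i)-r_1r_2\in I$ at $r_2=0$ plus zero-symmetry gives $RI\subseteq I$), and for $(b)\Rightarrow(c)$ you verify $(ar)(br')n=a\bigl((rb)(r'n)\bigr)\in A(BN)\subseteq P$ for each $a\in A$, $b\in B$ and then run the standard dichotomy over pairs. No objection there.

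The genuine gap is in $(a)\Rightarrow(b)$, and you have located it but not closed it. Your main argument sets $r'=1$, but the paper's standing convention is that $R$ has an identity \emph{only if specified}, and Theorem~\ref{classicalprime} does not specify one, so that step is unavailable. Your fallback --- applying $c$-classical primeness to the pairs $(a,br')$ for each $r'\in R$ --- is also insufficient: when $aN\nsubseteq P$ it yields $(br')N\subseteq P$ for every $r'$, i.e.\ $b(RN)\subseteq P$, and passing from $b(RN)\subseteq P$ to $bN\subseteq P$ is exactly the same obstruction you started with, since the submodule inclusion $RN\subseteq N$ points the wrong way and nothing guarantees $N\subseteq RN$ without an identity acting unitally. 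For what it is worth, the paper's own proof stumbles at the identical spot: it writes $(aR)(bR)N=(aRb)RN\subseteq(aRb)N\subseteq P$, concluding that the \emph{larger} set $(aRb)N$ lies in $P$ because the smaller set $(aRb)(RN)$ does. Indeed, since $(ar)(br')n=(arb)(r'n)$ with $r'n\in RN\subseteq N$, one always has $(aR)(bR)N\subseteq(aRb)N$; so the hypothesis of $3$-classical primeness is the one that is satisfied more often, and the honest consequence of this containment is the reverse implication $(b)\Rightarrow(a)$. To salvage $(a)\Rightarrow(b)$ you would need the extra hypothesis that $R$ has an identity with $1\cdot n=n$ on $M$ (so that $RN=N$ and the two test conditions coincide), or a genuinely different idea; as written, neither your main argument nor your fallback establishes it.
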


  \begin{proof} ~
  \begin{itemize}
      \item ($(a) \Longrightarrow (b)$)  Suppose $P$ is $c$-classical prime. Let $a, b \in R$ and for any $R$-submodule $N$ of $M,$ we assume that $(aR)(bR)N \subseteq P.$ If $aN \subseteq P$ then we are done. Now, suppose $aN \nsubseteq P.$ We have $(aR)(bR)N =(aRb)RN \subseteq P$. But $RN \subseteq N$, so $(aRb)RN \subseteq (aRb)N \subseteq P.$ Since $P$ is $c$-classical prime then $aN \nsubseteq P$ for $bN \nsubseteq P.$
       \item ($(b) \Longrightarrow (c)$)     Suppose $P$ is $3$-classical prime . Let $A,B$ be  left $R$-subgroups of $R$ and $N$ be an $R$-submodule of $M$ such that $ABN \subseteq P.$  If $AN \subseteq P$, then we are done. Suppose $AN \nsubseteq P$.  Then there exists $a \in A$ such that $aN \nsubseteq P.$ Now, for all $b \in B, (aR)(bR)N=a(Rb)(RN) \subseteq ABN \subseteq P.$ Since $P$ is $3$-classical prime, we have $aN \subseteq P$ or $bN \subseteq P.$ But $aN \nsubseteq P$;hence it follows that $bN \subseteq P$ and this is true for every $b \in B.$ Hence, $BN \subseteq P.$
 \item ($(c) \Longrightarrow (d)$)   Now suppose that $P$ is $2$-classical prime.Let $A$ and $B$ be ideals of $R$ and $N$ be an $R$-submodule of $M$ such that $ABN \subseteq P.$ Clearly, $A$ and $B$ are left $R$-subgroups of $R$. Since $P$ is $2$-classical prime, $AN \subseteq P$ or $BN \subseteq P$ and we are done.
  \end{itemize}

  \end{proof}

  \begin{cor}\label{sd}
      Let $M$ be an $R$-module.
      \begin{itemize}
          \item[(a)] $M$ is $c$-classical prime,
             \item[(b)] $M$ is $3$-classical prime,
                \item[(c)] $M$ is $2$-classical prime,
                   \item[(d)] $M$ is $0$-classical prime,
      \end{itemize}
      We have $(a) \Longrightarrow (b) \Longrightarrow  (c) \Longrightarrow  (d).   $
  \end{cor}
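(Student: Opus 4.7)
The plan is to deduce this corollary immediately from the preceding Theorem \ref{classicalprime} by specializing $P$ to the zero $R$-ideal. Recall from the definition just above Theorem \ref{classicalprime} that $M$ is said to be a $v$-classical prime $R$-module (for $v=0,2,3,c$) precisely when $RM\ne 0$ and $\{0\}\lhd_R M$ is a $v$-classical prime $R$-ideal of $M$. Since the nontriviality condition $RM\ne 0$ does not depend on $v$, all four notions share the same hypothesis on $RM$, and the only content of the corollary lies in comparing the four classical primeness conditions on the single $R$-ideal $\{0\}$.

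Concretely, I would argue as follows. Assume (a), so that $\{0\}$ is a $c$-classical prime $R$-ideal of $M$. By Theorem \ref{classicalprime} applied with $P=\{0\}$, $\{0\}$ is also a $3$-classical prime $R$-ideal, which by the definition means $M$ is a $3$-classical prime $R$-module; thus $(a)\Rightarrow(b)$. The same application of Theorem \ref{classicalprime} with $P=\{0\}$ gives $(b)\Rightarrow(c)$ and $(c)\Rightarrow(d)$, completing the chain.

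There is essentially no obstacle here: the work was already done in Theorem \ref{classicalprime}, and this corollary is the module-level restatement. The only point to check is notational, namely that the hypothesis $RM\nsubseteq P$ used in Theorem \ref{classicalprime} is compatible with $P=\{0\}$; this is exactly the condition $RM\ne 0$ built into the definition of a $v$-classical prime $R$-module, so no extra verification is required. The proof can therefore be written in a couple of lines, simply citing Theorem \ref{classicalprime} with $P=\{0\}$.
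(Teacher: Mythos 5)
Your proposal is correct and matches the paper's intent exactly: the corollary is stated as an immediate consequence of Theorem \ref{classicalprime} applied to the $R$-ideal $P=\{0\}$, with the observation that the condition $RM\ne 0$ in the definition of a $v$-classical prime module is the specialization of $RM\nsubseteq P$ and is independent of $v$. Nothing further is needed.
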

  In the case of a ring, the three classical prime definitions above coincide. However, in the case of a near-ring the four types of classical primes are non-equivalent. We demonstrate this by the following examples:

  \begin{exa}
      Let $R$ be the Klein-4-group, $K=\{ 0,1,2,3 \}$ with multiplication given by

\begin{equation*}
r \circ b=
\begin{cases}
r \thickspace \text{if $b=3$} \\
0 \thickspace \text{if $b \in \{ 0,1,2\}$}
\end{cases}
\end{equation*}

      \begin{table}[!h]
  \centering
  \caption{Table of multiplication  for $\big  (K,+ , \cdot \big )$ }
  \label{tab:table1}
  \begin{tabular}{cccccccccc}
   $\cdot $  & $0$ & $1$ &$ 2 $& $ 3 $  \\

   $0$ & $0 $ & $ 0$ & $0   $ & $ 0  $   \\
   
    $1$  & $ 0$ & $ 0$ & $  0 $ & $  1  $  \\
    
    $ 2$ & $0 $ & $0 $ & $0  $ & $ 2   $   \\
    
$ 3 $ & $0 $ & $0 $ & $ 0  $ & $ 3  $   \\

  \end{tabular}
\end{table} 

Consider the $R$-module $M=R_R.$ The  non-zero $R$-submodules of $M$ are $\{ 0,1 \}, \{ 0,2 \} $ and $\{ 0,1 ,2,3\}$. Now, $R$ has no non-zero proper ideals. So considering the three possibilities, we note that:
\begin{itemize}
    \item[(i)] $R^2M \ne \{ 0 \}$
      \item[(ii)] $R^2 \{0,1 \} \Longleftrightarrow R\{ 0,1 \}= \{0 \}$
          \item[(iii)] $R^2 \{0,2 \} \Longleftrightarrow R\{ 0,2 \}= \{0 \}$.
    
\end{itemize}
So, $ \{0 \}$ is a $0$-classical prime $R$-ideal and hence $M$ is $0$-classical prime. However, $\{0,2 \}$ is a left $R$-subgroup of $R$ with $\{0,2 \}.\{0,2 \}.\{0,1,2,3 \}= \{0 \}$ but $\{0,2 \}.\{0,1,2,3 \} \ne \{0 \}$.   $M$ is not $2$-classical prime. Also note that $M$ is not $c$-classical prime since $3 R2M=0$ but, $2M \ne 0$ and $3M \ne 0.$ 
  \end{exa}

  \begin{exa}
      Let $R$ be a near-ring defined on $ Z_3=\{ 0,1,2 \}$  by

      \begin{table}[!h]
  \centering
  \caption{Table of multiplication  for $\big  (R,+ , \cdot \big )$ }
  \label{tab:table1}
  \begin{tabular}{cccccccccc}
   $\cdot $  & $0$ & $1$ &$ 2 $ \\

   $0$ & $0 $ & $ 0$ & $0   $   \\
   
    $1$  & $ 0$ & $ 0$ & $  1$  \\
    
    $ 2$ & $0 $ & $0 $ & $2$   \\

  \end{tabular}
\end{table} 

Consider the $R$-module $M=R_R.$ Then $M$ has no proper $R$-submodules; hence $R$ has no proper $R$-subgroups. Since $RRM \ne  \{0 \}$, so $0$-classical prime $R$-ideal implies that $M$ is  $2$-classical prime. However, $(1R)(1R)M=0$ but $1M=1R \ne \{0 \}$. Therefore, $\{0 \}$ is not $3$-classical prime $R$-ideal, and hence $M$ is not $3$-classical prime. Also note that $M$ is not $c$-classical prime since $1 R 1M =0$ but $1 M \ne 0$ and $1 M \ne 0$. Furthermore, $RRR \ne \{0\}$,  so $\{0 \}$ is a $0$-classical prime of $R.$ Note that $R$ is not $c$-classical prime ideal of $R.$
  \end{exa}
\begin{exa} (\cite{pilz2011near}) We refer $GF(3^2)$ as the Galois field of order $3^2.$  Let $R$ be a finite near-field.
\label{ex2}Consider the field ($GF(3^{2})$, $+$, $\cdot$) with
\[GF(3^{2}) := \{0,1,2,x,1+x,2+x,2x,1+2x,2+2x\},\]
where $x$ is a zero of $x^{2}+1 \in \Bbb{Z}_{3}[x]$ with the new multiplication defined as

$$
a \circ b := \left\{\begin{array}{cc}
              a \cdot b     & \mbox{ if $a$  is a square in ($GF(3^{2})$, $+$, $\cdot$)}\\
              a \cdot b^3 & \mbox{ otherwise }
              \end{array}
\right.
$$
This gives the smallest finite  Dickson near-field $R=DN(3,2):=(GF(3^{2})$, $+$, $\circ)$, which is not a field. Here is the table of the new operation $ \circ$ for $DN(3,2)$.

\[
 \begin{array}{r|ccccccccc}
\circ     & 0 & 1             & 2            & x     & 1+x  & 2+x  & 2x   & 1+2x & 2+2x \\ \hline
0         & 0 & 0             & 0            & 0          & 0         & 0         & 0         & 0         & 0\\ 
1         & 0 & 1             & 2            & x     & 1+x  & 2+x  & 2x   & 1+2x & 2+2x \\    
2         & 0 & 2             & 1            & 2x    & 2+2x & 1+2x & x    & 2+x  & 1+x \\    
x    & 0 & x        & 2x      & 2          & 1+2x & 1+x  & 1         & 2+2x & 2+x \\    
1+x  & 0 & 1+x      & 2+2x    & 2+x   & 2         & 2x   & 1+2x & x    & 1 \\    
2+x  & 0 & 2+x      & 1+2x    & 2+2x  & x    & 2         & 1+x  & 1         & 2x \\    
2x   & 0 & 2x       & x       & 1          & 2+x  & 2+2x & 2         & 1+x  & 1+2x \\    
1+2x & 0 & 1+2x     & 2+x     & 1+x   & 2x   & 1         & 2+2x & 2         & x \\    
2+2x & 0 & 2+2x     & 1+x     & 1+2x  & 1         & x    & 2+x  & 2x   & 2     
 \end{array}
\]
 Since $R$ is a near-field,  $R$ has no proper $R$-submodules. Therefore,  has no proper $R$-subgroups. Hence the non-zero $R$-submodules and $R$-subgroup of $R$ is $R$. Note that for every $a, b \in R$ we have $(aRb)R \ne \{0 \}$. It follows that $\{ 0\}$ is a $c$-classical prime $R$-ideal. Thus $R_R$ is a $c$-classical prime $R$-module.\\
\end{exa}

\begin{exa}

Let $R$ be a near-ring defined on $ Z_6=\{ 0,1,2 ,3,4,5\}$  by

      \begin{table}[!h]
  \centering
  \caption{Table of multiplication  for $\big  (R,+ , \cdot \big )$ }
  \label{tab:table1}
  \begin{tabular}{cccccccccc}
   $\cdot $  & $0$ & $1$ &$ 2 $ &$ 3$ &$ 4$ &$ 5$  \\

    $0$ & $0$ &$ 0 $ &$ 0$ &$ 0$ &$ 0$  &$ 0$  \\
   
    $1$ & $3$ &$ 5 $ &$ 1$ &$ 3$ &$ 5$ &$ 1$   \\
    
    $2$ & $0$ &$ 4 $ &$ 2$ &$ 0$ &$ 4$ &$ 2$   \\
   
    $3$ & $3$ &$ 3 $ &$ 3$ &$ 3$ &$ 3$ &$ 3$   \\
    
    $4$ & $0$ &$ 2 $ &$ 4$ &$ 0$ &$ 2$ &$ 4$   \\
$5$ & $3$ &$ 1 $ &$ 5$ &$ 3$ &$ 1$ &$ 5$  \\
  \end{tabular}
\end{table} 

Let $M=R_R.$ Then, the non-zero $R$-submodules of $M$ are $\{0,3 \}$ and  $\{0, 1, 2, 3, 4, 5 \}$  Note that in the Definition \ref{new} $(d)$ does not imply $(c)$ in general. To see this, consider the $R$-submodules $N=\{0,3 \}$.  We have that for all $a,b \in R \: (aR)(bR)= \{0\} \Longrightarrow aN= \{0 \}$ or $bN = \{0 \}.$ Hence the $(c)$ is satisfied. But, $(d)$ is not satisfied. In fact there exist $a=3$ and $b=3$ such that $aRb=0$ but $aN \ne 0$ and $bN \ne 0.$

 \end{exa}
 \begin{rem} Let $v=0,2,3,c.$
    When we consider the near-ring modules $R_R$. The examples of $v$-classical $R$-modules work similarly for $v$-classical prime ideals of near-ring $R$.
 \end{rem}
\begin{pro}Let $R$ be a near-field. 
    Any proper $R$-ideal of $R^n$ is a $c$-classical prime $R$-ideals of $R^n$.
\end{pro}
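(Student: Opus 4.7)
My plan is to verify the $c$-classical prime condition of Definition \ref{new} directly, exploiting the fact that every nonzero element of the near-field $R$ has a multiplicative inverse. Since $P$ is a proper $R$-ideal and $R$ contains $1$, we have $R R^n = R^n \not\subseteq P$, so the standing hypothesis $RM \not\subseteq P$ is automatic.

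Let $a, b \in R$ and let $N$ be an $R$-submodule of $R^n$ with $(aRb)N \subseteq P$. If $a = 0$ then $aN = \{0\} \subseteq P$, and similarly if $b = 0$; so I may assume $a$ and $b$ are both nonzero. The key observation is that in this case $aRb = R$: for every nonzero $s \in R$, associativity of the near-ring multiplication gives $a \cdot (a^{-1} s b^{-1}) \cdot b = s$, where $a^{-1} s b^{-1} \in R$, and the case $s = 0$ is covered by zero-symmetry of $R$.

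With $aRb = R$ the hypothesis becomes $R N \subseteq P$. Since $1 \in R$, we get $N = 1 \cdot N \subseteq RN \subseteq P$, i.e., $N \subseteq P$. Because $P$ is an $R$-ideal, hence in particular an $R$-submodule, scalar multiplication keeps us inside $P$, so $a N \subseteq R N \subseteq P$ (and the same for $bN$), which establishes the $c$-classical prime condition.

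Notice that the argument never actually invokes the explicit classification of $R$-ideals of $R^n$ as products $S_1 \times \cdots \times S_n$; the only features of $R^n$ used are that $R$ acts with identity and that $R$ is a near-field. I do not foresee a serious obstacle, only a bookkeeping subtlety: the degenerate cases $a = 0$ and $b = 0$ must be peeled off separately so that the inverses $a^{-1}, b^{-1}$ may be invoked legitimately, and zero-symmetry must be cited to handle $s = 0$ when showing $0 \in aRb$.
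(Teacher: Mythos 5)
Your argument is correct, and it takes a genuinely different and more elementary route than the paper. The paper proves the contrapositive by invoking the structure theory of $R^n$: it writes $P = S_1\times\cdots\times S_n$ with each $S_i\in\{0,R\}$, writes $N=\bigoplus_i Ru_i$ with disjoint supports, and tracks a coordinate $j$ with $S_j=\{0\}$ to produce an element of $(aRb)N$ outside $P$. You instead observe that for $a,b\in R^*$ the set $aRb$ is all of $R$ (via $a(a^{-1}sb^{-1})b=s$, with $s=0$ handled by zero-symmetry), so the hypothesis $(aRb)N\subseteq P$ collapses to $RN\subseteq P$, and then $aN\subseteq RN\subseteq P$ and $bN\subseteq RN\subseteq P$ follow immediately; the degenerate cases $a=0$ or $b=0$ give $aN=\{0\}\subseteq P$ or $bN=\{0\}\subseteq P$ outright. (Note you do not even need the intermediate step $N=1\cdot N\subseteq P$: the inclusion $aN\subseteq RN$ already suffices.) What your approach buys is generality and economy: it never uses the classification of $R$-ideals or $R$-submodules of $R^n$, so it actually shows that \emph{every} proper $R$-ideal of \emph{any} unital module over a near-field is $c$-classical prime, with $R^n$ as a special case; the paper's argument is tied to the coordinate description of $R^n$ and is, if anything, harder to make fully rigorous (its choice of the coordinate $j$ is not quite the one forced by $aN\not\subseteq P$). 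The only hypotheses you rely on --- existence of a two-sided identity in $R$, invertibility of nonzero scalars, and $r\cdot 0=0$ --- are all available for a zero-symmetric near-field, so the proof stands.
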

\begin{proof}
    Let $P$ be any  proper $R$-ideals of $R^n$. Then, by Theorem \ref{thm:ege}  $P$ is of the form  $ S_1 \times S_2 \times \cdots \times S_n$ where $S_i= \{ 0 \}$ or $S_i=R$ for $i=1, \ldots,n$. Assume without loss of generality that $S_j= \{0 \}$ for some fixed $j$ where $1 \leq i <j<n$. Let $N$ be any $R$-submodule of $R^n$. Then by Theorem \ref{thm:ege1}, we have 
			 $N=\bigoplus_{i=1}^\ell Ru_i$
			for some non-zero vectors $u_1,\ldots,u_\ell$ with mutually disjoint supports. Let $a, b \in R$. Suppose $a (\bigoplus_{i=1}^\ell Ru_i ) \nsubseteq P$ and $b (\bigoplus_{i=1}^\ell Ru_i ) \nsubseteq P$. We have $a (\bigoplus_{i=1}^\ell Ru_i ) \nsubseteq P$  implies  there exists $r \in R$ at the $j$-component  of $\bigoplus_{i=1}^\ell Ru_i$ such that $(ar)_j \ne 0.$ Similarly  $b (\bigoplus_{i=1}^\ell Ru_i ) \nsubseteq P$  implies  there exists $s \in R$ at the $j$-component  of $\bigoplus_{i=1}^\ell Ru_i$ such that $(bs)_j \ne 0.$ It follows that $(arbs)_j \ne 0.$ Thus $(aRb)\bigoplus_{i=1}^\ell Ru_i \nsubseteq P$.  
\end{proof}
\begin{cor}Let $R$ be a near-field. 
    $R^n$ is a $c$-classical prime $R$-module.
\end{cor}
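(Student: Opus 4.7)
The plan is to derive the corollary as an immediate consequence of the preceding proposition by specialising it to the zero $R$-ideal. To say that $R^n$ is a $c$-classical prime $R$-module, by the definition given just after \Cref{classicalprime}, I need to verify two things: that $RR^n \ne \{0\}$, and that $\{0\}$ is a $c$-classical prime $R$-ideal of $R^n$.

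The first condition is essentially trivial: since $R$ is a near-field it has a multiplicative identity $1$, and the element-wise scalar multiplication on $R^n$ satisfies $1 \cdot v = v$ for every $v \in R^n$, so $RR^n = R^n \neq \{0\}$ (for $n \geq 1$ and $R$ nontrivial). For the second condition, I would simply observe that $\{0\} = \{0\} \times \{0\} \times \cdots \times \{0\}$ is a proper $R$-ideal of $R^n$ of the form described by \Cref{th4}, and then apply the preceding proposition directly to conclude that $\{0\}$ is $c$-classical prime.

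Combining these two facts gives the corollary. I do not expect any real obstacle here, since the preceding proposition has already done all the structural work (using Theorems \ref{thm:ege} and \ref{thm:ege1} to write arbitrary proper $R$-ideals as products of $\{0\}$'s and $R$'s and arbitrary $R$-submodules as direct sums $\bigoplus Ru_i$ with disjoint supports, then exploiting a coordinate $j$ where the ideal vanishes). The only minor point worth flagging in the write-up is that the hypothesis ``proper'' in the proposition is satisfied by $\{0\}$ because $R^n$ itself is nonzero, so the index $j$ with $S_j = \{0\}$ required by the proposition's argument trivially exists (indeed every $j$ works).
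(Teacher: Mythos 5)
Your proposal is correct and matches the paper's own (one-line) proof: the corollary is obtained by applying the preceding proposition to the proper $R$-ideal $\{0\}$ of $R^n$. Your extra checks — that $RR^n \ne \{0\}$ and that $\{0\}$ is indeed a proper $R$-ideal of the form in \Cref{th4} — are sensible details the paper leaves implicit.
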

 
 \begin{proof}
     It follows from the previous Proposition.
 \end{proof}

  \begin{pro}\label{sss}
      Let $R$ be a near-ring with identity $1$,  and let $P$ be an $R$-ideal of $M.$ Then $P$ is $2$-classical prime if and only if $P$ is $3$-classical prime.
  \end{pro}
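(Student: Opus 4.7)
The implication $3$-classical prime $\Rightarrow$ $2$-classical prime is already contained in Theorem~\ref{classicalprime}, so only the converse needs a separate argument. Assume $P$ is $2$-classical prime. Given $a,b\in R$ and an $R$-submodule $N$ of $M$ with $(aR)(bR)N\subseteq P$, the goal is to deduce $aN\subseteq P$ or $bN\subseteq P$.

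The plan is to manufacture suitable left $R$-subgroups on which to apply the $2$-classical prime hypothesis. Set $A:=Ra$ and $B:=Rb$. These are left $R$-subgroups of $R$: right distributivity gives $r_1a+r_2a=(r_1+r_2)a\in Ra$, so $Ra$ is an additive subgroup, and associativity gives $R\cdot Ra\subseteq Ra$. Because $R$ has identity $1$, we also have $a\in Ra$ and $b\in Rb$, which will be essential at the end.

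The key step is to show $ABN\subseteq P$. For any $r,s\in R$ and $n\in N$, associativity of near-ring multiplication gives $(ra)(sb)=r(asb)$, and the module compatibility $(r_1r_2)\circ m=r_1\circ(r_2\circ m)$ yields
\[
(ra)(sb)\cdot n \;=\; r\cdot\bigl((asb)\cdot n\bigr).
\]
Since $asb=(as)(1\cdot b)\in (aR)(bR)$, the element $(asb)\cdot n$ lies in $(aR)(bR)N\subseteq P$. Because $P$ is an $R$-ideal of $M$ and $R$ is zero-symmetric, $rp=r(0+p)-r\cdot 0\in P$ for every $p\in P$, so the right-hand side belongs to $P$. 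Hence $ABN\subseteq P$.

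Applying the $2$-classical prime property to $A,B,N$ yields $AN\subseteq P$ or $BN\subseteq P$, and since $a\in A$ and $b\in B$, this gives $aN\subseteq P$ or $bN\subseteq P$, which is exactly $3$-classical primeness. The only delicate point is the central computation displayed above, where one must keep straight three distinct layers—associativity in $R$, the module compatibility law, and the absorption $RP\subseteq P$ coming from $P$ being an $R$-ideal of a zero-symmetric near-ring—but each is a one-line check. Everything else is structural, with the identity $1\in R$ playing the essential role of recovering $a,b$ inside $Ra,Rb$.
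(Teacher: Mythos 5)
Your argument is correct, and it follows the same basic strategy as the paper's proof: pass from the elements $a,b$ to the left $R$-subgroups $Ra,Rb$, verify that the relevant triple product lands in $P$ using associativity and $RP\subseteq P$, invoke $2$-classical primeness, and then use the identity to recover $a\in Ra$ and $b\in Rb$. The one genuine difference is where the hypothesis is applied: the paper fixes each $n\in N$ separately and applies $2$-classical primeness to the cyclic submodule $Rn$, concluding ``$an\in P$ or $bn\in P$ for every $n$'' and then asserting ``$aN\subseteq P$ or $bN\subseteq P$.'' That last step interchanges a universal quantifier with a disjunction, and since the set $\{n\in N: an\in P\}$ need not be closed under addition in a near-ring module, the interchange is not automatic; the paper's proof as written has a gap there. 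Your version avoids this entirely by checking $(Ra)(Rb)N\subseteq P$ for the \emph{whole} submodule $N$ (via the elementwise computation $(ra)(sb)n=r((asb)n)$ with $asb\in(aR)(bR)$ since $b=b\cdot 1$) and applying the $2$-classical prime hypothesis once to the triple $(Ra,Rb,N)$, so the disjunction $aN\subseteq P$ or $bN\subseteq P$ comes out uniformly. In short: same idea, but your ordering of quantifiers is the right one, and it effectively repairs the paper's argument.
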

  \begin{proof}
      The fact that if $P$ is $3$-classical prime implies that it is $2$-classical prime has already been proven  in Theorem \ref{classicalprime}. Now suppose that $P$ is a $2$-classical prime $R$-ideal. Let $a,b  \in R$ be an $R$-submodule of $M$ such that $(aR)(bR)N \subseteq P.$ Then, for every $n \in N,$ we have that $(Ra)(Rb)(Rn)=R (aR)(bR)n \subseteq RP \subseteq P.$ Since, $Ra$ and $Rb$ are  left $R$-subgroups of $R$ and $Rn$ is an $R$-submodule of $M$, it follows from the fact that $P$ is $2$-classical, that $(Ra)(Rn) \subseteq P$ or $(Rb)(Rn) \subseteq P.$ In particular, since $1 \in R,$ it follows $1.a.1.n=an \in P$ or $1.b.1.n= bn \in P$ for every $n \in N.$ So $aN \subseteq P$ or $bN \subseteq P.$ Therefore $P$ is $3$-classical prime.
  \end{proof}
\begin{thm} Let $v=0,2,3,c$. 
    Every $v$-prime near-ring module is a $v$-classical near-ring module.
\end{thm}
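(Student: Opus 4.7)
The plan is to treat the four cases $v \in \{0,2,3,c\}$ uniformly: start from the corresponding classical-prime hypothesis, isolate a single element $n \in N$ so as to form auxiliary sets that still carry enough structure to be $R$-submodules of $M$, and then invoke the $v$-prime axiom on a well-chosen pair to split cases. The standing assumption $RM \nsubseteq P$ will be used precisely to exclude the degenerate alternative in which every scalar kills $M$ modulo $P$.

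For $v=0$ and $v=2$, assume $ABN \subseteq P$, fix $n \in N$, and form $Bn := \{bn : b \in B\}$. A short verification establishes that $Bn$ is itself an $R$-submodule of $M$: addition closes via $(b_1+b_2)n = b_1 n + b_2 n$, negation via $(-b)n = -(bn)$ from zero-symmetry, and scalar multiplication via $r(bn) = (rb)n$ together with $B$ being closed under left multiplication by $R$ (as an ideal, or as a left $R$-subgroup). Then $A \cdot (Bn) \subseteq ABN \subseteq P$, so the $v$-prime axiom yields $AM \subseteq P$ or $Bn \subseteq P$. The first alternative gives $AN \subseteq AM \subseteq P$; if it fails, then $Bn \subseteq P$ for every $n \in N$, whence $BN \subseteq P$.

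For $v=3$, assume $(aR)(bR)N \subseteq P$ and $aN \nsubseteq P$; the target is $bN \subseteq P$. For each $r' \in R$ and $n \in N$, set $m = br'n \in M$; module associativity shows $aRm \subseteq (aR)(bR)N \subseteq P$, and $aN \nsubseteq P$ forces $aM \nsubseteq P$, so $3$-prime yields $m \in P$. Hence $bRn \subseteq P$ for every $n \in N$. A second application of $3$-prime to the pair $(b,n)$ gives either $bM \subseteq P$ (so $bN \subseteq P$) or $n \in P$, in which case $bn \in RP \subseteq P$ by the $R$-ideal absorption property of $P$. The case $v=c$ follows the same two-step template: from $(aRb)N \subseteq P$ and $aN \nsubseteq P$ one extracts $r(bn) \in P$ for all $r \in R$ and $n \in N$; a second use of $c$-prime on the pair $(r, bn)$ leaves only two alternatives, namely $bn \in P$ (the desired conclusion) or $rM \subseteq P$ for every $r$ --- the latter forces $RM \subseteq P$, contradicting the standing hypothesis.

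The main obstacle is that near-ring modules lack left distributivity, so the ``product'' set $BN$ is in general not an $R$-submodule of $M$; this is why the argument must be run one element $n \in N$ at a time, replacing $BN$ by $Bn$, whose elements are pure products so that closure under scalar multiplication reduces to $r(bn) = (rb)n$ with no distributivity needed. A minor compatibility point is that the paper records both Dauns' and Juglal's versions of $v$-prime, differing in whether the test object inside $M$ is an $R$-submodule or the more restrictive $R$-ideal; the test objects produced by this argument ($Bn$, and single elements of $M$) are $R$-submodules, so the Dauns-style version of the $v$-prime axiom is the one directly invoked throughout.
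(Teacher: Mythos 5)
Your proof is correct and follows essentially the same route as the paper's: for $v=0,2$ the paper likewise fixes $n\in N$ and applies the $v$-prime axiom to the single-element product $Bn$ viewed as an $R$-submodule, and for $v=3$ (with $v=c$ left as "similar") it makes the same two successive applications of $3$-primeness. Your write-up is if anything slightly more careful, since it verifies that $Bn$ is indeed an $R$-submodule and explicitly flags that the Dauns-style quantification over $R$-submodules is the version of $v$-primeness actually being invoked.
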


    In general, for $v=0,2,3,c$ every $v$-classical prime near-ring module need not be $v$-prime.   We demonstrate this with the following examples.

     \begin{exa}
     \label{ss}
      Let $R$ be a near-ring defined on $ Z_4=\{ 0,1,2,3 \}$  by

      \begin{table}[!h]
  \centering
  \caption{Table of multiplication  for $\big  (R,+ , \cdot \big )$ }
  \label{tab:table1}
  \begin{tabular}{cccccccccc}
   $\cdot $  & $0$ & $1$ &$ 2 $ & $3$ \\

   $0$ & $0 $ & $ 0$ & $0   $ & $0$  \\
   
    $1$  & $ 0$ & $ 1$ & $  0$ & $0$   \\
    
    $ 2$ & $0 $ & $0 $ & $0$ & $0$    \\
        $ 3$ & $0 $ & $1 $ & $0$ & $1$    \\
    
  \end{tabular}
\end{table} 

Let $M=R_R.$ Then, the non-zero $R$-submodules of $M$ are $ \{ 0,1\}, \{0,2 \}$ and $M.$ Checking all the possibilities according to the Definition of a $3$-classical prime module, we have the following observations:
\begin{itemize}
    \item The following are all non-zero products: $1R1R \{0,1 \}, 1R1R M, 1R3R \{0,1 \}, 1R3R M,$  $ 3R3R \{0,1 \}, 3R3R M, $
    $3R1R \{0,1 \}$ and $3R1R M.$
    \item $1R2R \{0,1 \}=2R2R \{0,1 \}=2R1R \{0,1 \}= 2R3R \{0,1 \}=3R2R \{0,1 \}= \{0 \}$ and in each case $2. \{0,1 \}= \{0 \}.$
\item $ 1R1R \{0,2 \}= 1R2R \{0,2 \} =2R3R \{0,2 \}=2R2R \{0,2 \}=2R1R \{0,2 \}=2R3R \{0,2 \}=3R3R \{0,2 \}= 3R2R \{0,2 \}= 3R1R \{0,2 \}=0$ and in each case, any one of $1. \{0,2 \}= \{0 \}$ or  $2. \{0,2 \}= \{0 \}$  or  $2. \{0,2 \}= \{0 \}$ is satisfied. 
\item $1R2RM=2R2RM=2R1RM=2R3RM=3R2RM= \{0\}$ and in all cases $2 . M= \{0 \}$.
\end{itemize}
Hence $M$ is  $3$-classical prime $R$-module. However, $M$ is not $3$-prime since $1R2= \{0 \}$ but $1M \ne  \{0 \}$ and $ 2 \notin \{0 \} $. 

In the same way, we show that $M$ is a $c$-classical prime $R$-module.

\begin{itemize}
      \item The following are all non-zero products: $1R1 \{0,1 \}, 1R1 M, 1R3R \{0,1 \}, 1R3 M,$  $ 3R3 \{0,1 \}, 3R3 M, $
    $3R1 \{0,1 \}$ and $3R1 M.$
    \item $1R2 \{0,1 \}=2R2 \{0,1 \}=2R1 \{0,1 \}= 2R3 \{0,1 \}=3R2 \{0,1 \}= \{0 \}$ and in each case $2. \{0,1 \}= \{0 \}.$
\item $ 1R1 \{0,2 \}= 1R2 \{0,2 \} =2R3 \{0,2 \}=2R2 \{0,2 \}=2R1 \{0,2 \}=2R3 \{0,2 \}=3R3 \{0,2 \}= 3R2 \{0,2 \}= 3R1 \{0,2 \}=0$ and in each case, any one of $1. \{0,2 \}= \{0 \}$ or  $2. \{0,2 \}= \{0 \}$  or  $2. \{0,2 \}= \{0 \}$ is satisfied. 
\item $1R2M=2R2M=2R1M=2R3M=3R2M= \{0\}$ and in all cases $2 . M= \{0 \}$.
\end{itemize}

Hence $M$ is a $c$-classical prime $R$-module. However, $M$ is not $c$-prime since $1.2= 0 $ but $1M \ne  \{0 \}$ and $ 2 \ne 0 $. 
  \end{exa}
  \begin{exa}
      If $R$ is a near-ring with identity, then we know that a $2$-prime module and a $3$-prime module are equivalent. From Proposition \ref{sss}, the same is also true for $2$- and $3$-classical prime modules. So, if $R$ has identity and since the previous examples demonstrate that a $3$- classical prime module need not be $3$-prime, it follows that a $2$-classical prime module need not be $2$-prime.
  \end{exa}

  \begin{exa}
      In the Example \ref{ss}, we have a near-ring module, $M=R_R,$ which  is $3$-classical prime. By Corollary \ref{sd}, it is $0$-classical prime. However, in the same example, $\{ 0,1 \}$ is an ideal and $ \{0,2 \}$ is an $R$-submodule such that $ \{0,1  \}\{0,2 \}=0$ but neither $\{0,1 \}M=0$ nor $\{0,2 \} \subseteq \{0 \}.$ So $M$ is not $0$-prime.
  \end{exa}
The following theorem provides the characterization of $0$-classical prime $R$-ideal of $M.$
  \begin{thm}
  \label{prod}
  Let $M$ be an $R$-module and let $P\lhd_R M. $ Then the following statements are equivalents:

  \begin{itemize}
      \item[(i)] $P$ is $0$-classical prime
      \item[(ii)] For all ideals, $A$ and $B$ of $R$, and every $m \in M$ such that $AB (Rm) \subseteq P,$ it follows that $A(Rm) \subseteq P$ or $B(Rm) \subseteq P.$
       \item[(iii)] For every $0 \ne \overline{m} \in \frac{M}{P}, (0:R \overline{m})$ is a $0$-prime ideal of $R$.
       \item[(iv)] For every $m \in M \backslash P, (P:Rm)$ is a $0$-prime ideal of $R,$ and $(P:M)$ is a $0$-prime ideal of $R$.
  \end{itemize}
      
  \end{thm}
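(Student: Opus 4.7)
The plan is to establish the cycle $(i)\Rightarrow(ii)\Rightarrow(iii)\Rightarrow(iv)\Rightarrow(i)$, which yields the equivalence of all four statements. The first implication is immediate, since for any $m\in M$ the set $Rm$ is an $R$-submodule of $M$: it is closed under addition by right distributivity $(r+s)m=rm+sm$, closed under negation via $(-r)m=-(rm)$, and $R$-invariant by the associativity axiom $s(rm)=(sr)m$. Hence (ii) is just (i) specialized to cyclic submodules $N=Rm$.

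For $(ii)\Rightarrow(iii)$, fix $m\in M\setminus P$, so $\overline{m}\ne 0$ in $M/P$. Then $(0:_R\overline{m})=(P:_R m):=\{r\in R: rm\in P\}$ is an ideal of $R$ by the preliminaries. To show it is $0$-prime, take ideals $A,B$ of $R$ with $ABm\subseteq P$. The key observation is that $AB(Rm)\subseteq ABm$: a typical element $(ab)(rm)$ equals $\bigl(a(br)\bigr)m$ by module associativity, and $br\in BR\subseteq B$ since $B$ is an ideal, so $a(br)\in AB$. Hence $AB(Rm)\subseteq ABm\subseteq P$, and (ii) gives $A(Rm)\subseteq P$ or $B(Rm)\subseteq P$. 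Invoking the identity $1\in R$ (as is standard in these characterizations), this descends to $Am\subseteq P$ or $Bm\subseteq P$, i.e., $A\subseteq(P:_R m)$ or $B\subseteq(P:_R m)$.

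For $(iii)\Rightarrow(iv)$, to see $(P:M)$ is $0$-prime: if $AB\subseteq(P:M)$ then $ABm\subseteq P$ for every $m\in M$, and whenever $m\notin P$, applying (iii) yields $A\subseteq(P:_R m)$ or $B\subseteq(P:_R m)$. One then lifts this pointwise dichotomy to the global conclusion $AM\subseteq P$ or $BM\subseteq P$ using the classical fact that a two-sided ideal of a near-ring cannot be written as the union of two strictly smaller ideals. The assertion for $(P:_R Rm)$ is handled by the same argument applied to the submodule $Rm$ in place of $M$. Finally, for $(iv)\Rightarrow(i)$, suppose $ABN\subseteq P$ for an $R$-submodule $N$ and pick $n\in N$. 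If $n\in P$ then $An,\,Bn\subseteq P$ automatically, since $P$ is $R$-invariant. If $n\notin P$ then $AB(Rn)\subseteq ABN\subseteq P$ and $(P:_R Rn)$ is $0$-prime by (iv), so one of $A,B$ lies in $(P:_R Rn)$; combined with the $0$-primeness of $(P:M)$ and one more union-of-ideals promotion, this globalises to $AN\subseteq P$ or $BN\subseteq P$.

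The principal obstacle lies in the globalisation steps $(iii)\Rightarrow(iv)$ and $(iv)\Rightarrow(i)$, because the natural sets $\{n\in N: An\subseteq P\}$ and $\{m\in M: Am\subseteq P\}$ are, in general, \emph{not} $R$-submodules of $N$ or $M$: near-ring modules lack left distributivity and may have non-abelian addition, so these sets are typically not closed under the ambient addition. The standard workaround, which I would use throughout, is to transfer the dichotomy from $M$ up to the level of ideals in $R$, where the union-of-two-ideals principle is well-behaved, and then exploit the $0$-primeness of the colon ideals $(P:_R m)$, $(P:_R Rm)$ and $(P:M)$ to synthesise the required module-level conclusion.
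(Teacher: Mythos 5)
Your cycle $(i)\Rightarrow(ii)\Rightarrow(iii)\Rightarrow(iv)\Rightarrow(i)$ matches the paper's, and $(i)\Rightarrow(ii)$ is fine. But there are two genuine problems. First, in $(ii)\Rightarrow(iii)$ you read $(0:R\overline{m})$ as $\{r: r\overline{m}=0\}=(P:m)$, whereas in the paper's notation (compare $(P:M)=\{r: rM\subseteq P\}$ and the explicit $(P:Rm)$ in (iv)) it is the annihilator of the cyclic submodule, namely $\{r: rR\overline{m}=0\}$. With the correct reading the implication is immediate from (ii): the hypothesis $AB\subseteq(0:R\overline{m})$ says $AB(Rm)\subseteq P$, and the conclusion $A(Rm)\subseteq P$ is literally $A\subseteq(0:R\overline{m})$, with no identity needed. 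Your version instead has to ``invoke $1\in R$'' to descend from $A(Rm)\subseteq P$ to $Am\subseteq P$, but the paper assumes an identity only when specified, and this theorem does not specify one.

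Second, and more seriously, the globalisation in $(iii)\Rightarrow(iv)$ (and again in $(iv)\Rightarrow(i)$) is not actually carried out. What must be proved has the form: for every $m\in M\setminus P$ one knows $A\subseteq(P:Rm)$ or $B\subseteq(P:Rm)$, and one must conclude that one of the two alternatives holds uniformly in $m$. This is a quantifier exchange over the whole family of colon ideals $\{(P:Rm)\}_{m}$; the fact that an ideal contained in a union of two ideals lies in one of them concerns a single containment $C\subseteq D_1\cup D_2$ and does not have the right logical shape to produce it. You correctly observe that the module-level sets $\{m: Am\subseteq P\}$ need not be subgroups (which is exactly why the naive argument fails in a near-ring module), but ``transfer to ideals of $R$ and apply the union-of-two-ideals principle'' is not a substitute argument: you would need to manufacture, from hypothetical witnesses $m_1$ with $A\nsubseteq(P:Rm_1)$ and $m_2$ with $B\nsubseteq(P:Rm_2)$, a single $m_3$ exhibiting both failures, and without left distributivity $m_3=m_1+m_2$ does not do this. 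The paper closes this gap by a different device: it asserts that any two colon ideals $(P:Rn)$ and $(P:Rm)$ are comparable under inclusion and argues by cases on that comparison. Whatever the merits of that assertion, your proposal neither reproduces it nor supplies a working alternative, and since this is the crux of the theorem the proof is incomplete as it stands.
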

  \begin{proof}~
  \begin{itemize}
      \item $(i) \Longrightarrow (ii)$ Let $A$ and $B$ be ideals of $R$, and let $m \in M$ such that $AB(Rm) \subseteq P.$ Since $P$ is $0$-classical prime and $Rm$ is an $R$-submodule of $M,$ by definition it follows that $A(Rm) \subseteq P$ or $B(Rm) \subseteq P.$
          \item $(ii) \Longrightarrow (iii)$ Suppose that $0 \ne  \overline{m} \in \frac{M}{P}.$ Let $A$ and $B$ be ideals of $R$ such that $AB \subseteq (0:R\overline{m}) =  \{ a \in R: aR\overline{m}=0 \}$. Then $AB(R\overline{m})=0 \Longrightarrow AB(Rm) \subseteq P.$ From $(ii)$, it follows that $A(Rm) \subseteq P$ or $B(Rm) \subseteq P$. Hence $A(R\overline{m})=0$ or  $B(R\overline{m})=0 \Longrightarrow A \subseteq (0:R\overline{m}) $ or $B \subseteq (0:R\overline{m}) $.
 \item  $(iii) \Longrightarrow (iv)$ The first part of $(iv)$ is simply a restatement of $(iii).$ Now let $A$ and $B$ be ideals of $R$ such that $AB\subseteq (P:M)$ which implies that $ABM \subseteq P.$ We need to show that $AM \subseteq P$ or $BM \subseteq P.$ Suppose that $BM \nsubseteq P.$ Since $P lhd_R M,$ we know that $AP \subseteq P$ and $BP \subseteq P.$ So consider $n \in M \backslash P$ such that $Bn \nsubseteq P.$ Now $AB (Rn) \subseteq ABM \subseteq P$ implies that $AB \subseteq (P:Rn).$ Since $(P:Rn)$ is $0$-prime ideal of $R$, it follows that $A \subseteq (P:Rn)$ or $B \subseteq (P:Rn)$ whence $A(Rn)\subseteq P$ or $B(Rn)\subseteq P$. But, $Bn \nsubseteq$ implies $B (Rn) \nsubseteq P.$ So, $A(Rn) \subseteq P.$

 Now let $ m \in M \backslash P$ where $m \ne n.$ Then, either $(P:Rn) \subseteq (P:Rm)$ or $(P:Rm) \subseteq (P:Rn).$ If $(P:Rn) \subseteq (P:Rm)$, then, since $A \subseteq (P:Rn),$ we have $A \subseteq (P:Rm)$ implying that $Am \subseteq A(Rm) \subseteq P.$

 If $(P:Rm) \subseteq (P:Rn)$, then $AB(Rm) \subseteq ABM \subseteq P \Longrightarrow AB \subseteq (P:Rm).$ Since $(P:Rm)$ is $0$-prime, we have $A \subseteq (P:Rm)$ or $B \subseteq (P:Rm).$ But, $B \subseteq (P:Rm) \Longrightarrow B \subseteq (P:Rn) \Longrightarrow B (Rn) \subseteq P$ is a contradiction. So $A(Rm) \subseteq P \Longrightarrow Am \subseteq P.$ Hence $AM \subseteq P.$
 
\item   $(iv) \Longrightarrow (i)$  Let $A$ and $B$ be ideals of $R$ and $N$ be an $R$-submodule of $M$ such that $ABN \subseteq P. $ Then $AB(RN)\subseteq ABN \subseteq P$. So, for all $n \in N, AB(Rn) \subseteq P $ implies $AB \subseteq (P: Rn).$ If $n  \in P,$ then we know that $An \subseteq P$ and $Bn \subseteq P.$ If $n \notin P,$ then  from $(iv), (P:Rn)$ is $0$-prime. So $AB \subseteq (P:Rn)  \Longrightarrow A \subseteq (P:Rn)$ or $B \subseteq (P:Rn) \Longrightarrow A (Rn) \subseteq P$ or $B (Rn) \subseteq P \Longrightarrow An \subseteq P$ or $Bn \subseteq P.$ Hence, for all $n \in N, An \subseteq P$ or $An \subseteq P$ or $Bn \subseteq P.$ So $AN \subseteq P$ or $BN \subseteq P$ and we are done.
 \end{itemize}
  \end{proof}
Similarly, the following result gives the characterization of $2$-classical prime $R$-ideal of $M.$
    \begin{pro} Let $M$ be an $R$-module and let $P\lhd_R M. $ Then the following statements are equivalents:

  \begin{itemize}
      \item[(i)] $P$ is $2$-classical prime.
      \item[(ii)] For all left $R$-subgoups, $A$ and $B$ of $R$, and every $m \in M$ such that $AB (Rm) \subseteq P,$ it follows that $A(Rm) \subseteq P$ or $B(Rm) \subseteq P.$
       \item[(iii)] For every $0 \ne \overline{m} \in \frac{M}{P}, (0:R \overline{m})$ is a $2$-prime ideal of $R$.
       \item[(iv)] For every $m \in M \backslash P, (P:Rm)$ is a $2$-prime ideal of $R,$ and $(P:M)$ is a $2$-prime ideal of $R$.
  \end{itemize}
      
  \end{pro}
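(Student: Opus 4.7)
The plan is to adapt the cyclic chain of implications $(i)\Rightarrow(ii)\Rightarrow(iii)\Rightarrow(iv)\Rightarrow(i)$ already used for Theorem \ref{prod}, replacing ``ideals of $R$'' by ``left $R$-subgroups of $R$'' at every step and verifying that each deduction still goes through. The structure of the arguments is entirely parallel, so I will mainly describe the substitutions and flag the one place where extra care is needed.

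For $(i)\Rightarrow(ii)$ I would simply note that $Rm$ is an $R$-submodule of $M$, so the definition of $2$-classical primeness applied to $A,B$ (left $R$-subgroups) and $N=Rm$ gives the claim directly. For $(ii)\Rightarrow(iii)$, I would fix $\overline{m}\neq 0$ in $M/P$ and take left $R$-subgroups $A,B$ with $AB\subseteq(0:R\overline{m})$; then $AB(Rm)\subseteq P$, and $(ii)$ yields $A(Rm)\subseteq P$ or $B(Rm)\subseteq P$, which is exactly $A\subseteq(0:R\overline{m})$ or $B\subseteq(0:R\overline{m})$. The main subtlety here (absent in the ideal case) is to check that $(0:R\overline{m})$ is genuinely an ideal of $R$ so that $2$-primeness even makes sense; this follows from the lemma recalled in the preliminaries, applied to the $R$-ideal $P$.

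The implication $(iii)\Rightarrow(iv)$ is the step that carries the bulk of the work. The first half of $(iv)$ is an instance of $(iii)$ via $\overline{m}\neq 0$. For the second half, I would take left $R$-subgroups $A,B$ with $AB\subseteq(P:M)$, so that $ABM\subseteq P$, and argue as in Theorem \ref{prod}: assume $BM\nsubseteq P$, pick $n\in M\setminus P$ with $Bn\nsubseteq P$, observe $AB\subseteq(P:Rn)$, and use the $2$-primeness of $(P:Rn)$ to force $A\subseteq(P:Rn)$. Then for an arbitrary $m\in M\setminus P$ one compares $(P:Rm)$ with $(P:Rn)$ to conclude $Am\subseteq P$; combined with $Am\subseteq AP\subseteq P$ for $m\in P$, this gives $AM\subseteq P$, i.e.\ $A\subseteq(P:M)$. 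This comparison step is the technical heart, and it is exactly the one that transferred cleanly from the $0$-classical argument because the underlying manipulations use only products and containments, never the stronger multiplicative closure properties of ideals.

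Finally, $(iv)\Rightarrow(i)$ goes through verbatim: given left $R$-subgroups $A,B$ and an $R$-submodule $N$ of $M$ with $ABN\subseteq P$, for each $n\in N\setminus P$ the relation $AB(Rn)\subseteq P$ combined with the $2$-primeness of $(P:Rn)$ yields $An\subseteq P$ or $Bn\subseteq P$, while for $n\in N\cap P$ both inclusions are trivial; a uniform choice across $N$ is obtained by noting that if $An\not\subseteq P$ for some $n\in N$, then $An\not\subseteq P$ forces $Bn'\subseteq P$ for every $n'\in N$, and vice versa, so $AN\subseteq P$ or $BN\subseteq P$. I expect the hardest step to be $(iii)\Rightarrow(iv)$, specifically showing that the $2$-primeness of each $(P:Rm)$ for $m\notin P$ propagates to the single ideal $(P:M)$; this is where the comparison argument from Theorem \ref{prod} must be invoked unchanged, and any gap there would have to be patched here as well.
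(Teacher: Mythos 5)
Your proposal is correct and takes essentially the same route as the paper, which simply states that the proof is obtained from Theorem \ref{prod} by replacing ideals of $R$ with left $R$-subgroups throughout; your step-by-step verification (including the flag that the comparability of $(P:Rn)$ and $(P:Rm)$ in $(iii)\Rightarrow(iv)$ is inherited, gap and all, from the $0$-classical case) matches the intended argument.
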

  The proof is similar to the proof of the previous theorem.
 \begin{pro} Let $M$ be an $R$-module and let $P\lhd_R M. $ Then the following statements are equivalents:

  \begin{itemize}
      \item[(i)] $P$ is $3$-classical prime.
      \item[(ii)] For all $a,b \in R$ and every  $R$-submodule $N$ of $M$ such that $aRb (N) \subseteq P,$ it follows that $aN \subseteq P$ or $bN\subseteq P.$
      \item[(iii)] For all $a,b \in R$ and every   $ m \in M$ such that $(aR)(bR)m  \subseteq P,$ it follows that $aRm \subseteq P$ or $b Rm \subseteq P.$
       \item[(iv)] For every $0 \ne \overline{m} \in \frac{M}{P}, (0:R \overline{m})$ is a $3$-prime ideal of $R$.
       \item[(v)] For every $m \in M \backslash P, (P:Rm)$ is a $3$-prime ideal of $R,$ and $(P:M)$ is a $2$-prime ideal of $R$.
  \end{itemize}
      
  \end{pro}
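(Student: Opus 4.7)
The plan is to establish the cycle $(i) \Rightarrow (ii) \Rightarrow (iii) \Rightarrow (iv) \Rightarrow (v) \Rightarrow (i)$, closely paralleling the proof of Theorem \ref{prod} for the $0$-classical case. The engine driving the first three implications is a single associativity calculation in the module: for $r,r' \in R$ and $m \in M$, near-ring associativity together with the axiom $(xy)m = x(ym)$ gives $(ar)(br')m = (arb)(r'm)$, so as sets one has $(aR)(bR)m = (aRb)(Rm)$, and using $RN \subseteq N$ one obtains $(aR)(bR)N \subseteq (aRb)(RN) \subseteq (aRb)N$ for any $R$-submodule $N$.

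With this identity in hand, $(i) \Rightarrow (ii)$ is immediate: $(aRb)N \subseteq P$ forces $(aR)(bR)N \subseteq P$, on which $3$-classical primeness acts. For $(ii) \Rightarrow (iii)$ I would take $N := Rm$ and use $(aRb)(Rm) = (aR)(bR)m$ to transfer hypothesis. For $(iii) \Rightarrow (iv)$, unfolding $(0 : R\overline{m}) = \{r \in R : rRm \subseteq P\}$, the hypothesis $aRb \subseteq (0 : R\overline{m})$ translates to $(aR)(bR)m \subseteq P$, and (iii) returns $aRm \subseteq P$ or $bRm \subseteq P$, i.e., $a$ or $b$ in $(0 : R\overline{m})$.

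For $(iv) \Rightarrow (v)$, the statement that $(P:Rm)$ is $3$-prime for $m \notin P$ is simply (iv) rewritten for $\overline{m} \ne 0$ in $M/P$. The assertion about $(P:M)$ requires an extra argument in the style of Theorem \ref{prod}: assuming $aRb \subseteq (P:M)$ but $a,b \notin (P:M)$, pick witnesses $m_1, m_2 \in M$ with $am_1,\, bm_2 \notin P$, and use the $3$-primeness of $(P:Rm_1)$ and $(P:Rm_2)$ together with a comparison between these two annihilator ideals under inclusion to derive a contradiction from $(aRb)M \subseteq P$.

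Finally, for $(v) \Rightarrow (i)$, given $(aR)(bR)N \subseteq P$ I would argue element-wise: for each $n \in N$ the identity $(aR)(bR)n = (aRb)(Rn)$ yields $aRb \subseteq (P:Rn)$, and when $n \notin P$ the $3$-primeness of $(P:Rn)$ in (v) forces $aRn \subseteq P$ or $bRn \subseteq P$ (whence $an$ or $bn$ lies in $P$, using the identity of $R$ as is done implicitly in the earlier theorems). The main obstacle will be upgrading this pointwise dichotomy to the uniform conclusion $aN \subseteq P$ or $bN \subseteq P$: unlike in the ring case, the lack of left-distributivity on $M$ blocks the standard ``test on $n_1 + n_2$'' device, so one must instead exploit the $2$-primeness of $(P:M)$ from (v) together with a comparison of the annihilators of two witnesses $n_1, n_2$, exactly in the spirit of the $(iv) \Rightarrow (i)$ argument of Theorem \ref{prod}. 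I expect verifying that this comparison argument carries over cleanly to the $3$-prime setting to be the main technical subtlety.
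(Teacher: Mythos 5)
Your proposal follows essentially the same route as the paper: the same cycle $(i)\Rightarrow(ii)\Rightarrow(iii)\Rightarrow(iv)\Rightarrow(v)\Rightarrow(i)$, driven by the identity $(aR)(bR)N\subseteq(aRb)(RN)\subseteq(aRb)N$, with the last two implications handled by adapting the corresponding steps of Theorem~\ref{prod} exactly as the paper does. The ``pointwise to uniform'' subtlety you flag in $(v)\Rightarrow(i)$ is real, but the paper's own proof of Theorem~\ref{prod}$(iv)\Rightarrow(i)$ passes over it just as tersely, so your treatment is, if anything, slightly more careful than the original.
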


 \begin{proof}~
  \begin{itemize}
      \item $(i) \Longrightarrow (ii)$ Let $a,b  \in R$ and let $N$ be a $R$-submodule of $M$ such that $aRb(N) \subseteq P.$ Then $(aR)(bR) N \subseteq a Rb (N)  \subseteq P$. Since, $P$ is $3$-classical prime, $aN \subseteq P$ or $bN \subseteq P.$
 \item  $(ii) \Longrightarrow (iii)$ Let $a,b  \in R$ and let $m \in M$ such that $(aR)(bR)m \subseteq P$ which implies $(aRb)(Rm) \subseteq P.$ Then, from $(ii)$, it follows that $aRm \subseteq P$ or $bRm \subseteq P.$

\item   $(iii) \Longrightarrow (iv)$ Let  $0 \ne \overline{m} \in \frac{M}{P} $ and let $a,b \in R$ such that $aRb \subseteq (0: R \overline{m}).$ Then $(aRb)(R \overline{m})=0 \Longrightarrow (aRbR)m \subseteq P$. From $(iii)$, it follows that $aRm \subseteq P$ or $bRm \subseteq P$. Hence $a (R \overline{m})=0$ or $b (R \overline{m})=0 \Longrightarrow a \in (0: R \overline{m})$ or  $ b \in (0: R \overline{m})$

\item   $(iv) \Longrightarrow (v)$  The first part of $(v)$ is simply a restatement of $iv$. Now, let $a, b \in R$ such that $aRb \subseteq  (P:M)$ which implies that $(aRb)M \subseteq P$. We need to show that $aM \subseteq P$ or $bM \subseteq P.$ The rest of the proof is simply an adaptation of the third part in the proof of Theorem \ref{prod}.

\item   $(v) \Longrightarrow (i)$ It is similar to the fourth of part of the proof of Theorem \ref{prod}.
 \end{itemize}

 \end{proof}

 \begin{pro} Let $M$ be an $R$-module and let $P\lhd_R M. $ Then the following statements are equivalent:

  \begin{itemize}
      \item[(i)] $P$ is $c$-classical prime.
      \item[(ii)] For all $a,b \in R$ and every  $R$-submodule $N$ of $M$ such that $aRb (N) \subseteq P,$ it follows that $aN \subseteq P$ or $bN\subseteq P.$
      \item[(iii)] For all $a,b \in R$ and every   $ m \in M$ such that $(aRb)m  \subseteq P,$ it follows that $aRm \subseteq P$ or $b Rm \subseteq P.$
       \item[(iv)] For every $0 \ne \overline{m} \in \frac{M}{P}, (0:R \overline{m})$ is a $c$-prime ideal of $R$.
       \item[(v)] For every $m \in M \backslash P, (P:Rm)$ is a $c$-prime ideal of $R,$ and $(P:M)$ is a $2$-prime ideal of $R$.
  \end{itemize}
      
  \end{pro}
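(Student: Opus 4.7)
The plan is to close the cycle $(i) \Rightarrow (ii) \Rightarrow (iii) \Rightarrow (iv) \Rightarrow (v) \Rightarrow (i)$, modelled directly on the proof of the preceding proposition for $3$-classical prime $R$-ideals, with the substitutions appropriate for the completely prime setting. Since $(ii)$ is just Definition \ref{new}(d) with the quantifier over $a,b \in R$ written out, the implication $(i) \Rightarrow (ii)$ is definitional. For $(ii) \Rightarrow (iii)$ I would specialise $(ii)$ to the cyclic $R$-submodule $N = Rm$ (a subgroup of $(M,+)$ by distributivity and closed under the $R$-action by associativity): after verifying that $(aRb)m \subseteq P$ lifts to $(aRb)(Rm) \subseteq P$ via near-ring/module associativity together with $RP \subseteq P$, condition $(ii)$ delivers $aRm \subseteq P$ or $bRm \subseteq P$.

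For $(iii) \Rightarrow (iv)$ fix $0 \ne \overline{m} \in M/P$; the hypothesis $ab \in (0:R\overline{m})$ unpacks to $(ab)R\overline{m} = 0$, which rearranges to $(aRb)m \subseteq P$, whereupon $(iii)$ yields $aRm \subseteq P$ or $bRm \subseteq P$, i.e.\ $a \in (0:R\overline{m})$ or $b \in (0:R\overline{m})$, establishing $c$-primeness of $(0:R\overline{m})$. For $(iv) \Rightarrow (v)$ the first clause of $(v)$ is a restatement of $(iv)$; the second clause, that $(P:M)$ is $2$-prime, I would obtain by reproducing the third bullet in the proof of Theorem \ref{prod} with the $0$-prime step replaced by a $c$-prime step, using the monotonicity of the colon ideals $(P:Rn)$ as $n$ varies over $M \setminus P$ to compare $(P:Rm)$ against $(P:Rn)$ and force $aM \subseteq P$ when $bM \not\subseteq P$. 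Finally $(v) \Rightarrow (i)$ mirrors the fourth bullet of Theorem \ref{prod}: given $(aRb)N \subseteq P$, process each $n \in N$ separately — points in $N \cap P$ are trivial, while for $n \in N \setminus P$ the $c$-primeness of $(P:Rn)$ combined with $(aRb)n \subseteq P$ forces $an \in P$ or $bn \in P$ — and aggregate to obtain $aN \subseteq P$ or $bN \subseteq P$.

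The main obstacle I anticipate is the step $(ii) \Rightarrow (iii)$: in a general near-ring without identity, a typical element $(arb)(sm) = (arbs)m$ of $(aRb)(Rm)$ carries an extra right factor $s$ not present in the hypothesis $(aRb)m \subseteq P$, so promoting $(aRb)m \subseteq P$ to $(aRb)(Rm) \subseteq P$ is not purely formal. Absorbing this $s$-factor requires leaning on the full $R$-ideal structure of $P$ — the axiom $r(m'+p) - rm' \in P$ combined with zero-symmetry, which in particular yields $RP \subseteq P$ — and this is precisely the technical point that distinguishes the $c$-classical case from the $3$-classical case of the previous proposition, where the more symmetric form $(aR)(bR)$ closes up under right $R$-multiplication automatically and no such absorption is needed.
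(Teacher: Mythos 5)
Your overall strategy --- closing the cycle $(i)\Rightarrow(ii)\Rightarrow(iii)\Rightarrow(iv)\Rightarrow(v)\Rightarrow(i)$ by transcribing the $3$-classical argument --- is exactly what the paper intends (its entire proof is the sentence ``similar to the previous Proposition''), and you are right that the transfer is not purely formal. But the repair you propose for $(ii)\Rightarrow(iii)$ does not work. You want to promote $(aRb)m\subseteq P$ to $(aRb)(Rm)\subseteq P$ by absorbing the extra factor $s$ in $(arb)(sm)=(arbs)m$ using $RP\subseteq P$. That inclusion only lets you multiply elements of $P$ on the \emph{left}: from $(ar'b)m\in P$ you may conclude $(tar'b)m\in P$, never $(arbs)m\in P$, because $rbs$ need not lie in $Rb$. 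The reason the analogous step is painless in the $3$-classical proposition is the set identity $(aR)(bR)m=(aRbR)m=(aRb)(Rm)$ --- the free right factor is already built into $bR$ --- and precisely this identity is unavailable for $aRb$.

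The same left/right mismatch breaks $(iii)\Rightarrow(iv)$ and $(v)\Rightarrow(i)$, and there you do not flag the problem at all. The hypothesis $ab\in(0:R\overline{m})$ unpacks to $(ab)(s\overline{m})=(abs)\overline{m}=0$ for all $s$, i.e.\ $(abR)m\subseteq P$; it does \emph{not} ``rearrange to'' $(aRb)m\subseteq P$, since $(abR)m=\{(abr)m: r\in R\}$ and $(aRb)m=\{(arb)m: r\in R\}$ are different subsets of $M$ in a noncommutative near-ring, so $(iii)$ cannot be invoked as you describe. Likewise in $(v)\Rightarrow(i)$, applying $c$-primeness of $(P:Rn)$ requires $ab\in(P:Rn)$, i.e.\ $(abs)n\in P$ for all $s$, which does not follow from $(aRb)n\subseteq P$ without an identity (with $1\in R$ one at least has $ab=a\cdot 1\cdot b\in aRb$ and $sn\in N$, so $(ab)(sn)\in(aRb)N\subseteq P$). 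To obtain a complete proof you must either assume $R$ has an identity, or restate $(iii)$ and the $c$-prime condition in a form compatible with $aRb$ (e.g.\ hypothesis $(aRb)(Rm)\subseteq P$ in $(iii)$, and the $aRb$-based version of $c$-primeness in $(iv)$--$(v)$); as written, your chain of implications does not go through.
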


  The proof is similar to the proof of the previous Proposition.

  \begin{pro}
      Let $v=0,2,3,c$ and suppose that $P$ is a $v$-classical prime $R$-ideal of an $R$-module $M.$ Then, for any $R$-ideal, $N$ of $M, (P:N)$ is $v$-prime ideal of $R$.
  \end{pro}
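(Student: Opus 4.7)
The plan is to handle the four cases $v \in \{0, 2, 3, c\}$ by a common template: unpack the $v$-prime hypothesis asserted of $(P:N)$, right-multiply by $N$ to pass into $P$, invoke the $v$-classical prime property of $P$ (recalling that the $R$-ideal $N$ of $M$ is in particular an $R$-submodule, hence a legitimate test object for the classical prime definition), and translate the resulting inclusion back into a statement about $(P:N)$.

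For $v = 0$ and $v = 2$, the template is essentially definition-chasing. Suppose $AB \subseteq (P:N)$ for ideals (respectively, left $R$-subgroups) $A,B$ of $R$. Then $ABN \subseteq P$, and the $v$-classical prime property applied with the $R$-submodule $N$ yields $AN \subseteq P$ or $BN \subseteq P$, which rephrases as $A \subseteq (P:N)$ or $B \subseteq (P:N)$.

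For $v = 3$, the additional input is the near-ring identity $(aR)(bR) = (aRb)R$ (an instance of associativity of $\cdot$ in $R$), together with the submodule axiom $RN \subseteq N$. Then $aRb \subseteq (P:N)$ unfolds as $(aRb)N \subseteq P$, so $(aR)(bR)N = (aRb)RN \subseteq (aRb)N \subseteq P$, and applying $3$-classical primeness of $P$ produces $aN \subseteq P$ or $bN \subseteq P$, i.e., $a \in (P:N)$ or $b \in (P:N)$.

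The main obstacle is the $v = c$ case. Here the hypothesis on $(P:N)$ is only $ab \in (P:N)$, giving $abN \subseteq P$, whereas $c$-classical primeness of $P$ is formulated in terms of $(aRb)N \subseteq P$. In the absence of an identity in $R$, the passage from $abN \subseteq P$ to $(aRb)N \subseteq P$ is not automatic, since an element $(arb)n = (ar)(bn)$ is not obviously in $P$ merely from knowing $ab \cdot n' \in P$ for $n' \in N$. My plan is to mimic the $v = 3$ argument using the pointwise characterization of $c$-classical primeness (statement (iii) of the preceding proposition) applied at each $n \in N$, and then assemble the pointwise conclusions via the $R$-ideal axioms for $P$ and $N$; this is the delicate part of the proof and where I would expect to spend the bulk of the effort.
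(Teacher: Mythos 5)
Your cases $v=0,2,3$ coincide with the paper's own proof: for $v=0$ and $v=2$ one multiplies $AB\subseteq(P:N)$ by $N$ and applies classical primeness of $P$ to the $R$-submodule $N$ (every $R$-ideal of $M$ being an $R$-submodule), and for $v=3$ one uses $(aR)(bR)N=(aRb)(RN)\subseteq(aRb)N\subseteq P$ exactly as the paper does. Those three cases are complete and need no changes.

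For $v=c$ you have correctly located the sore point, but the repair you sketch does not close it. The pointwise characterization (iii) of $c$-classical primeness still carries the hypothesis $(aRb)m\subseteq P$, whereas from $ab\in(P:N)$ you only obtain $abn\in P$ for each $n\in N$; there is no passage from $abN\subseteq P$ to $(aRb)N\subseteq P$ (with or without an identity in $R$), so ``applying (iii) at each $n$'' never gets started, and the $R$-ideal axioms for $P$ and $N$ do not supply the missing inclusion. The paper disposes of this case with the single line ``similar to the case $v=3$,'' which is coherent only if one takes its Section~2.3 definition of a $c$-prime ideal of $R$ at face value, namely ``$aRb\subseteq P$ implies $a\in P$ or $b\in P$'' (as printed, this duplicates the $3$-prime definition and is presumably a typo for the element-wise ``$ab\in P$''). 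Under that reading the $c$ case is immediate: $aRb\subseteq(P:N)$ gives $(aRb)N\subseteq P$ and $c$-classical primeness of $P$ applies directly. Under the standard element-wise definition of completely prime, neither your plan nor the paper's one-line remark establishes the $v=c$ case; you should either adopt the $aRb$ formulation explicitly or add a hypothesis (e.g.\ an identity together with an argument that $abN\subseteq P$ forces $(aRb)N\subseteq P$, which in general it does not).
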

\begin{proof}~
   \begin{itemize}
      \item $(v=0):$ Let $A$ and $B$ be ideals of $R$ such that $AB \subseteq (P:N).$ Then $ABN \subseteq P, $ and since $P$ is $0$-classical prime, we have that $AN \subseteq P$ or $BN \subseteq P.$ So, $A \subseteq (P:N)$ or $B \subseteq (P:N).$
      \item $(v=2):$ Similar to the case $v=0,$ but by choosing $A$ and $B$ to be left $R$-subgroups of $R.$
      \item $(v=3):$ Let $a, b \in R$ such that $aRb \subseteq (P:N).$ Then, $(aRb)N \subseteq P.$ Now $(aR)(bR)N=(aRb)(RN) \subseteq (a R b)N \subseteq P.$ Since, $P$ is $3$-classical prime. It follows that $aN \subseteq P$ or $bN \subseteq P;$ whence $a \in (P:N)$ or $b \in (P:N)$.
      \item $(v=c):$ Similar to the case $v=3.$
  \end{itemize}

\end{proof}

  \begin{pro}
      Let $v=0,2,3,c$ and suppose that $P$ is $v$-prime $R$-ideal of an $R$-module $M.$ Then,  $P$ is $v$-classical $R$-ideal of $M.$
  \end{pro}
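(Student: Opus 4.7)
The plan is to handle each $v \in \{0, 2, 3, c\}$ separately via a uniform ``witness and reduction'' strategy. Assume $P$ is $v$-prime and the $v$-classical hypothesis holds -- the relevant product ($ABN$, $(aR)(bR)N$, or $(aRb)N$) lies in $P$. I negate one of the two desired conclusions (for instance $bN \nsubseteq P$ or $BN \nsubseteq P$) to extract a witness $m_0 \in M \setminus P$ of the form $bn_0$ or $b_0 n_0$, and then apply the $v$-prime hypothesis to force the remaining conclusion $aM \subseteq P$ (respectively $AM \subseteq P$). Since $N \subseteq M$, this in turn gives the desired $aN \subseteq P$ (respectively $AN \subseteq P$).

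For the pointwise cases the argument is clean, assuming $R$ has identity. In $v = 3$: from $(aR)(bR)N \subseteq P$ and a witness $bn_0 \notin P$, we have $bn_0 = b \cdot 1 \cdot n_0 \in (bR)N$, so $aR(bn_0) \subseteq (aR)(bR)N \subseteq P$, and $3$-primeness combined with $bn_0 \notin P$ yields $aM \subseteq P$. The case $v = c$ is analogous: $a(bn_0) = abn_0 \in (aRb)N \subseteq P$, and $c$-prime delivers $aM \subseteq P$. For $v = 2$: given left $R$-subgroups $A, B$ of $R$ and a witness $b_0 n_0 \notin P$, the cyclic $R$-submodule $R(b_0 n_0)$ contains $b_0 n_0$ (using identity) and satisfies $A \cdot R(b_0 n_0) \subseteq ABN \subseteq P$, because $RB \subseteq B$. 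Juglal's $2$-prime then gives $AM \subseteq P$ (done) or $R(b_0 n_0) \subseteq P$ (which contradicts $b_0 n_0 \notin P$).

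The case $v = 0$ is the main technical obstacle, because Juglal's $0$-prime is phrased in terms of $R$-ideals of $M$ rather than mere $R$-submodules. Here I would replace the cyclic submodule by the $R$-ideal $\langle b_0 n_0 \rangle$ of $M$ generated by the witness and verify $A \cdot \langle b_0 n_0 \rangle \subseteq P$. The containment is immediate on generators $dn \in BN$ (from $ABN \subseteq P$); extending it to the full $R$-ideal -- in particular to elements of the form $r(m + bn) - rm$ -- is the bookkeeping that requires care, leveraging that $A$ is a two-sided ideal of $R$ and that $P$ is itself an $R$-ideal of $M$, so that the closure operations defining $\langle b_0 n_0 \rangle$ respect $A$-annihilation into $P$. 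Once this containment is established, $0$-primeness gives $AM \subseteq P$ (done) or $\langle b_0 n_0 \rangle \subseteq P$ (contradicting $b_0 n_0 \notin P$). Throughout I work under the assumption that $R$ has identity, matching the setting of Proposition \ref{sss}; without it, the pointwise arguments collapse only to weaker statements such as $aRN \subseteq P$ or $bRn_0 \subseteq P$, requiring extra hypotheses to close the gap.
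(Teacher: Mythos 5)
Your strategy for $v=2,3,c$ is essentially the paper's contrapositive argument, but you have imported a hypothesis the statement does not grant: you assume throughout that $R$ has an identity, whereas the paper fixes $R$ as a near-ring ``with identity only if specified,'' and its own proof never uses one. The identity is avoidable: for $v=3$ the paper applies $3$-primeness to the elements $n\in bRN$ (so that $aRn\subseteq (aR)(bR)N\subseteq P$ directly), concluding $aM\subseteq P$ or $bRN\subseteq P$, and then runs the $3$-prime hypothesis a second time on $bRm$ for $m\in N$; no witness of the form $b\cdot 1\cdot n_0$ is ever needed. Similarly for $v=0,2$ the paper simply notes that for each fixed $n\in N$ the set $Bn$ is an $R$-submodule of $M$ with $A(Bn)\subseteq ABN\subseteq P$, applies primeness to get $AM\subseteq P$ or $Bn\subseteq P$, and lets $n$ range over $N$. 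You should restructure your pointwise cases along these lines, or else explicitly add the identity hypothesis and weaken the claim.

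The more serious problem is your $v=0$ case, which you correctly flag as the technical obstacle but do not actually close. Your plan is to pass to the $R$-ideal $\langle b_0n_0\rangle$ generated by the witness and verify $A\cdot\langle b_0n_0\rangle\subseteq P$. That containment is not ``bookkeeping'': the generated $R$-ideal contains elements of the form $r(m+b_0n_0)-rm$ for arbitrary $m\in M$, and to show $a\bigl(r(m+b_0n_0)-rm\bigr)\in P$ you would need something like $ar(m+b_0n_0)-arm\in P$, which does not follow from $arb_0n_0\in ABN\subseteq P$ because near-ring multiplication does not distribute over subtraction on the left. So the route as proposed stalls at exactly the point you defer. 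The paper avoids constructing any $R$-ideal from a single element by applying the primeness hypothesis to the $R$-submodule $Bn$ (i.e., it is implicitly using the Dauns-style formulation in which the second argument ranges over $R$-submodules rather than $R$-ideals); if you insist on Juglal's $R$-ideal version of $0$-prime, you must either prove $Bn$ (or your generated object) is an $R$-ideal with the required containment, or switch to the submodule formulation as the paper does. Until one of these is done, the $v=0$ case is a genuine gap.
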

\begin{proof}~
   \begin{itemize}
       \item  $(v=0):$ Let $A$ and $B$ be ideals of $R$ and $N$ be an $R$-submodule of $M$ such that $ABN \subseteq P.$ Then, for each $n \in N, A(Bn) \subseteq ABN \subseteq P.$ Since, $Bn$  is a submodule of $M$ and $P$ is $0$-prime, it follows that $AM \subseteq P$ or $Bn \subseteq P,$ and hence $AN \subseteq P$ or $BN \subseteq P.$
       \item  $(v=2):$ Let $A$ and $B$ be left $R$-subgroups of $R$ and $N$ be an $R$-submodule of $M$ such that $ABN \subseteq P.$ The rest of the proof follows as in the previous case. 
       \item  $(v=3):$ Let $a, b \in R$ and $N$ be an $R$-submodule of $M$  such that $(aR)(bR)N \subseteq P.$ Then for each $n \in bRN,$ we have $aRn \subseteq (aR)(bRN) \subseteq P.$ Since, $P$ is $3$-prime, we have that $aM \subseteq P$ or $n \in P.$ If $aM \subseteq P$, then $aN \subseteq P$ and we are done. If $n \in P,$ then $bRN \subseteq P.$ So for each $m \in N, bRm \subseteq P.$ Again, since $P$ is $3$-prime, we get $bM \subseteq P$ or $ m \in P.$ If $bM \subseteq P$ then $bN \subseteq P.$ If $m \in P,$ then $N \subseteq P.$ But, $bN \subseteq N.$ So, in either case $bN \subseteq P.$
       \item $(v=c):$ Similar to the case $v=3.$
   \end{itemize}
  \end{proof}
    \begin{cor}
          Let $v=0,2,3,c$ and   $P$ is a $v$-prime $R$-ideal of an $R$-module $M$.  Then, for any $R$-ideal, $N$ of $M, (P:N)$ is $v$-classical prime ideal of $R$
    \end{cor}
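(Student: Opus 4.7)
The plan is to chain the two propositions immediately preceding the corollary and then close the remaining gap with one short ideal-theoretic observation.

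First I would invoke the proposition just above, which upgrades the hypothesis: since $P$ is a $v$-prime $R$-ideal of $M$, it is also a $v$-classical prime $R$-ideal of $M$. Next I would apply the earlier proposition (the one asserting that $(P:N)$ is a $v$-prime ideal of $R$ whenever $P$ is $v$-classical prime and $N$ is an $R$-ideal of $M$), applied to the $R$-ideal $N$, to conclude that $(P:N)$ is a $v$-prime ideal of $R$.

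It then remains to show the elementary fact that every $v$-prime ideal $Q$ of $R$ is automatically a $v$-classical prime ideal of $R$ in the sense introduced at the start of Section~3. For $v=0$, suppose $ABI \subseteq Q$ for ideals $A,B,I$ of $R$; applying $0$-primality to the factorisation $(AB)I \subseteq Q$ gives either $AB \subseteq Q$ (whence a second application yields $A \subseteq Q$ or $B \subseteq Q$, and hence $AI \subseteq Q$ or $BI \subseteq Q$) or $I \subseteq Q$ (in which case $AI, BI \subseteq Q$ trivially). The variants $v=2,3,c$ are handled by the same two-step split, with ``ideals'' replaced by left $R$-subgroups or by the elements $a,b \in R$ (via $aR,bR$ or via $aRb$) as demanded by the respective definition.

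The main obstacle, such as it is, lies in this last definitional step: one has to check in each of the four variants that the three-factor classical-prime condition collapses via the two-factor primality hypothesis. Since every case reduces to a routine split on which of the factors lands inside $Q$ first, no genuine difficulty is anticipated, and the corollary follows.
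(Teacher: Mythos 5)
Your opening move is the right one and is exactly what the paper intends: the proposition immediately before the corollary upgrades $P$ from $v$-prime to $v$-classical prime, and the proposition before that then applies to the $R$-ideal $N$. The problem is the bridge you build at the end. Those two propositions only deliver that $(P:N)$ is a \emph{$v$-prime} ideal of $R$, and your proposed upgrade to \emph{$v$-classical prime} rests on ``applying $0$-primality to the factorisation $(AB)I \subseteq Q$.'' That step is not licensed in a near-ring: the definition of a $0$-prime (resp.\ $2$-prime) ideal quantifies only over pairs of \emph{ideals} (resp.\ left $R$-subgroups), and the product $AB$ of two ideals of a near-ring is in general neither additively closed (if products are taken elementwise) nor a left ideal, since $r(s+ab)-rs$ need not lie in $AB$ when the left distributive law fails. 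The same difficulty recurs in your $v=3,c$ variants, where without an identity one only gets $bRI \subseteq Q$ rather than $bI \subseteq Q$ from the two-step split. So the ``elementary fact'' you rely on --- every $v$-prime ideal of $R$ is $v$-classical prime --- is precisely the kind of statement that is automatic for rings but not for near-rings, and your argument for it has a genuine gap.

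The way to close the corollary without that lemma is to stay at the module level, which is what the juxtaposition of the two propositions is designed for: having upgraded $P$ to $v$-classical prime, take ideals $A,B,I$ of $R$ with $ABI \subseteq (P:N)$ and observe that for each $n \in N$ the set $In$ is an $R$-submodule of $M$ (it is a subgroup because $(i_1+i_2)n = i_1 n + i_2 n$, and $R(In)=(RI)n \subseteq In$ since $R$ is zero-symmetric). Then $AB(In) \subseteq P$ and the three-factor classical-prime condition for $P$ applies directly, yielding $A(In) \subseteq P$ or $B(In) \subseteq P$ and hence $AI \subseteq (P:N)$ or $BI \subseteq (P:N)$; the $v=2,3,c$ cases are handled the same way with the appropriate substitutes for $A$ and $B$. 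You should either adopt this route or supply an honest proof (or counterexample-free justification) of the ring-level implication you are invoking.
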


\begin{defn}
    Let $R$ be a near-ring and $M$ be an $R$-module. Then a non-empty set $S \subseteq M \backslash \{ 0 \}$ is called a:
    \begin{itemize}
        \item[(a)] classical $m_0$-system if for all ideals, $A$ and $B$ of $R$, and all $R$-submodules $K$ and $L$ of $M$ such that $(K+AL) \cap S \ne \emptyset$ and $(K+BL) \cap S \ne \emptyset $, it follows that $(K+ABL) \cap S \ne \emptyset$.
           \item[(b)] classical $m_2$-system if for all left $R$-subgroups, $A$ and $B$ of $R$, and all $R$-submodules $K$ and $L$ of $M$ such that $(K+AL) \cap S \ne 0$ and $(K+BL) \cap S \ne \emptyset $, it follows that $(K+ABL) \cap S \ne \emptyset $.

             \item[(c)] classical $m_3$-system if for all $ a,b \in R$, and all $R$-submodules $K$ and $L$ of $M$ such that $(K+aL) \cap S \ne \emptyset $ and $(K+bL) \cap S \ne \emptyset$, it follows that $(K+(aR)(bR)L) \cap S \ne \emptyset $.

               \item[(d)] classical $m_c$-system if for all $ a,b \in R$, and all $R$-submodules $K$ and $L$ of $M$ such that $(K+aL) \cap S \ne \emptyset$ and $(K+bL) \cap S \ne \emptyset$, it follows that $(K+abL) \cap S \ne \emptyset $.
        
    \end{itemize}
\end{defn}

\begin{thm}
    Let $R$ be a near-ring and $M$ be an $R$-module. Then for $v=0,2,3,c$ we have $P \lhd_R M$ is $v$-classical prime if and only if $M \backslash P$ is a classical $m_v$-system.
\end{thm}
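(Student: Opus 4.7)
The plan is to prove both directions by contraposition, treating all four values of $v$ by one template that uses two elementary facts about the $R$-ideal $P$. First, since $P \lhd_R M$, the subgroup $(P,+)$ is normal, so $k + P = P + k$ for every $k \in M$, and consequently for any subset $X$ of $M$ with $0 \in X$ one has $(P + X) \cap (M \setminus P) \neq \emptyset$ if and only if $X \not\subseteq P$. Second, for any $R$-submodule $K$ and subset $Y$ with $0 \in Y$, the inclusion $K + Y \subseteq P$ is equivalent to the conjunction $K \subseteq P$ and $Y \subseteq P$, because $0 \in Y$ forces $K \subseteq K+Y$ and $0 \in K$ forces $Y \subseteq K+Y$. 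Throughout what follows let $\Pi_v(A,B)$ denote the product attached to each value of $v$, namely $AB$ for $v=0,2$, $(aR)(bR)$ for $v=3$, and the product ($ab$ or $aRb$, as appropriate from the definition) for $v=c$.

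For the implication $(\Leftarrow)$, assume that $S := M \setminus P$ is a classical $m_v$-system, and let $A,B$ (of the appropriate type) and an $R$-submodule $N$ of $M$ satisfy $\Pi_v(A,B)N \subseteq P$. I specialize the $m_v$-system condition to $K := P$ (which is an $R$-submodule since $P$ is an $R$-ideal) and $L := N$. If $AN \not\subseteq P$ and $BN \not\subseteq P$, the first fact gives $(P+AN)\cap S \neq \emptyset$ and $(P+BN)\cap S \neq \emptyset$, and then the $m_v$-system property yields $(P + \Pi_v(A,B)N) \cap S \neq \emptyset$. But $\Pi_v(A,B)N \subseteq P$ implies $P + \Pi_v(A,B)N \subseteq P + P = P$, contradicting that this set meets $S$. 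Hence $AN \subseteq P$ or $BN \subseteq P$, so $P$ is $v$-classical prime.

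For the implication $(\Rightarrow)$, assume $P$ is $v$-classical prime and let $A,B,K,L$ satisfy the hypotheses of the $m_v$-system definition, that is, $(K+AL)\cap S \neq \emptyset$ and $(K+BL)\cap S \neq \emptyset$. Argue by contradiction: suppose $(K + \Pi_v(A,B)L) \cap S = \emptyset$, i.e.\ $K + \Pi_v(A,B)L \subseteq P$. The second recorded fact then forces $K \subseteq P$ and $\Pi_v(A,B)L \subseteq P$. With $K \subseteq P$, the first fact upgrades the hypotheses to $AL \not\subseteq P$ and $BL \not\subseteq P$, which together with $\Pi_v(A,B)L \subseteq P$ directly contradict $P$ being $v$-classical prime.

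The main obstacle is the careful bookkeeping required for $v=c$: the definition of $c$-classical prime in this paper uses the product $aRb$ while the classical $m_c$-system is stated in terms of $ab$, so the template above applies verbatim only once one verifies that $ab \in aRb$ in the relevant setting (which is automatic when $R$ has identity, and can in general be handled by reading the products consistently in both definitions). For $v \in \{0,2,3\}$ the template applies without adjustment because the same product $\Pi_v(A,B)$ appears in both definitions.
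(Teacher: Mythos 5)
Your proof is correct and follows essentially the same route as the paper's: both directions are argued by contradiction, using that $K+X\subseteq P$ forces both $K\subseteq P$ and $X\subseteq P$, the only cosmetic difference being that you specialize $K=P$ in the converse where the paper implicitly takes $K=\{0\}$. Your closing remark about $v=c$ identifies a genuine mismatch between the paper's definition of $c$-classical prime (stated with $aRb$) and of a classical $m_c$-system (stated with $ab$); the paper's own proof dismisses this case with ``similar to $v=3$'' and does not resolve it either, so your caveat that one needs $ab\in aRb$ (e.g., an identity in $R$) or a consistent reading of the two products is a fair observation about the source rather than a defect of your argument.
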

\begin{proof}~
    \begin{itemize}
      
        \item $(v=0):$ Suppose $P$ is $0$-classical prime and let $S=M \backslash P.$ Let $A$ and $B$ be ideals of $R$, and $K$ and $L$ be $R$-submodules of $M$ such that $(K+AL) \cap S \ne \emptyset$ and $(K+BL) \cap S \ne \emptyset$. If $(K+AL) \cap S \ne \emptyset$, then $ABL \subseteq P.$ Since $P$ is $0$-classical prime, $AL \subseteq P$ or $BL \subseteq P.$ Hence, it follows that $(K+AL) \cap S = \emptyset$ or $(K+BL) \cap S = \emptyset$ which is a contradiction. So, $M \backslash P$ is a classical $m_0$-system.
        
        On the other hand, let $S=M \backslash P$ be a classical $m_0$-system. Suppose $ABL \subseteq P$  where $A$ and $B$ are ideals of $R$ and $L$ is an $R$-submodule of $M.$ If $AL \nsubseteq P$ and $BL \nsubseteq P,$ then $AL \cap S \ne \emptyset$ and $BL \cap S \ne \emptyset$. However, since $ABL \subseteq P,$ we have $ABL \cap S = \emptyset$ which contradicts that $S$ is an $m_0$-system. Hence $AL \subseteq P$ or $BL \subseteq P$ implies that $P$ is $0$-classical prime.
        \item $(v=2):$ Similar to the case $v=0.$
        \item $(v=3):$ Suppose $P$ is  $3$-classical prime and let $S=M \backslash P.$ Let $a, b \in R$ and $K$ and $L$ be $R$-submodules of $M$ such that $(K+aL) \cap S \ne \emptyset $ and $(K+bL) \cap S \ne \emptyset$.  If $(K+(aR)(bR)L) \cap S = \emptyset $ then $(aR)(bR)L \subseteq P.$ Since $P$ is $3$-classical prime, $aL \subseteq P$ or $bL \subseteq P$. Hence, it follows that $(K+aL) \cap S = \emptyset$ or $(K+bL) \cap S = \emptyset$ which is a contradiction. So, $M \backslash P$ is a classical $m_3$-system.

        Conversely, let $S=M \backslash P$ be a classical $m_3$-system. Suppose $(aR)(bR)L \subseteq P$ where $a, b \in R$ and $L$ is an $R$-submodule of $M.$  If $aL \nsubseteq P$ and $bL \nsubseteq P$, then $aL \cap S \ne \emptyset.$ However, since $(aR)(bR)L \subseteq P$, we have $(aR)(bR)L \cap S = \emptyset,$ which contradicts that $S$ is an $m_3$-system. Hence  $aL \subseteq P$ or $bL \subseteq P$ implies that $P$ is $3$-classical prime.
 \item $(v=c):$ Similar to the case $v=3.$
    \end{itemize}
\end{proof}
  
In what follows, we would like to discuss few properties of the annihilator.
\begin{defn}
    Let $P \subseteq M.$ The left annihilator of $P$ in $R$ is defined by $\Ann(P)= \{ r \in R : rP=0 \}$
\end{defn}
  It is clear that $\Ann(P)$ is a left ideal of $R$. It is shown that if $P \leq_R R $ then $\Ann(P)\lhd R.$

  \begin{pro} Let $v=0,2,3,c.$
	    If $P$ is $v$-classical $R$-ideals of $R_R$ then $\Ann(P)$ is a $v$-classical prime ideal of $R.$
	\end{pro}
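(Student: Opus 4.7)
The plan is to mirror the earlier proposition (the one showing that $P$ being $v$-classical prime implies $\widetilde{P}=(P:M)$ is a $v$-classical prime ideal of $R$), but this time translating the annihilator condition via the identity $\Ann(P)=\{r\in R:rP=0\}$ instead of via $(P:M)$. I will write out the case $v=0$ in some detail; the cases $v=2,3,c$ then go through along the same template, by replacing the ideals $A,B$ by left $R$-subgroups (for $v=2$) or by sets of the form $(aR),(bR)$, respectively $(aRb)$ (for $v=3,c$), exactly as in the proofs of the preceding propositions.

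First I unpack the hypothesis. Fix ideals $A,B,I$ of $R$ with $ABI\subseteq\Ann(P)$; by definition of $\Ann(P)$, this says $(ABI)P=\{0\}$, i.e.\ $(AB)(IP)\subseteq\{0\}\subseteq P$. Next, $IP$ is an $R$-submodule of $R_R$: it is clearly an additive subgroup, and $R(IP)=(RI)P\subseteq IP$ because the near-ring ideal $I$ satisfies $RI\subseteq I$ (take $r_{2}=0$ in condition~(c) of the definition of an ideal, using that $R$ is zero-symmetric). Applying the $0$-classical prime property of $P$ to the pair of ideals $A,B$ and the $R$-submodule $N=IP$ then gives $A(IP)\subseteq P$ or $B(IP)\subseteq P$, i.e.\ $(AI)P\subseteq P$ or $(BI)P\subseteq P$.

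The subtle and central step is to strengthen this containment $(AI)P\subseteq P$ up to the equality $(AI)P=\{0\}$ (and similarly for $B$), which is precisely what is required to conclude $AI\subseteq\Ann(P)$. I expect to close this gap using the characterization proved in Theorem~\ref{prod}: for every $m\in M\setminus P$ the ideal $(P:Rm)$ is $0$-prime. Working $p$-by-$p$ for $p\in P$ and playing the primeness of these ideals against the product $ABI\subseteq\Ann(P)$ should allow me to upgrade containment in $P$ to annihilation, uniformly in $p$. This is the main obstacle, since the classical prime condition on $P$ only directly supplies containments inside $P$, not equalities with zero; the uniformity in $p$ is what ultimately forces $(AI)P=\{0\}$. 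The remaining cases $v=2,3,c$ then follow from the corresponding analogues of Theorem~\ref{prod} stated immediately after it in the paper, the classical-prime characterization for the appropriate kind of multipliers replacing two-sided ideals by left $R$-subgroups or singletons.
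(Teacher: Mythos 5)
There is a genuine gap, and you have in fact pointed at it yourself: everything up to ``$A(IP)\subseteq P$ or $B(IP)\subseteq P$'' is fine (the observation that $IP$ is an $R$-submodule of $R_R$ and the application of $0$-classical primeness to $AB(IP)\subseteq\{0\}\subseteq P$ are both correct), but the promotion of $(AI)P\subseteq P$ to $(AI)P=\{0\}$ is the entire content of the proposition and is not carried out. The strategy you sketch for closing it cannot work: Theorem~\ref{prod}(iv) only asserts that $(P:Rm)$ is $0$-prime for $m\in M\setminus P$, whereas the elements you need to control are precisely the $p\in P$ (for which $(P:Rp)=R$, since $RP\subseteq P$ for any $R$-ideal, so the characterization says nothing). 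More fundamentally, the classical prime hypothesis on $P$ only ever produces containments \emph{in} $P$; no combination of the cited results manufactures an equality with $\{0\}$, which is what membership in $\Ann(P)$ requires. So the argument as proposed does not establish $AI\subseteq\Ann(P)$ or $BI\subseteq\Ann(P)$.

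For comparison, the paper's own proof takes the same first step ($ABIP=0$, hence $AB(IP)=0$) and then simply asserts the equalities ``$AI=0$ or $BI=0$'' where the classical prime hypothesis only yields ``$AI\subseteq P$ or $BI\subseteq P$'' (it even inserts the unjustified claim that $AB(IP)=0$ implies $AB=0$). In other words, the published argument silently performs exactly the upgrade from containment in $P$ to annihilation that you correctly flagged as the obstacle. You have located the real difficulty more honestly than the paper does, but neither your proposal nor the paper's proof actually overcomes it; as written, the proposal is incomplete at its central step.
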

 \begin{proof}~
     \begin{itemize}
         \item For $v=2$:  Assume $0 \ne P \lhd_R M$ such that $P$ is $2$-classical prime. Let $A,B$ be left $R$-subgoups of $R$ and $I$ ideals of $R. $ We have $ABI \subseteq \Ann(P).$ Then $ABIP =0$ implies $AB(IP)=0$, which implies $AB=0.$ Since $P$ is $2$-classical prime $R$-ideal of $R_R$, $ABI=0 \Longrightarrow AI=0$ or $BI=0 \Longrightarrow  (AI)P=0$ or $(BI)P=0 \Longrightarrow AI \subseteq l(P)$ or $BI \subseteq \Ann(P).$
         \item For $v=3$: Similar to the case $v=3.$
     \end{itemize}
 \end{proof}

 \begin{pro}
     Let $M$ be $3$-prime $R$-module. $M$ is faithful near-ring module if and only if $\Ann(\gen(m)) =0$ for all $0 \ne m \in M.$
 \end{pro}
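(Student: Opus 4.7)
The plan is to prove the two implications separately, using $3$-primeness in the forward direction and the trivial inclusion $\gen(m) \subseteq M$ in the reverse direction. Recall that since $M$ is assumed to be a $3$-prime $R$-module we have $RM \neq 0$ and, for all $a \in R$ and $x \in M$, $aRx = 0$ implies $aM = 0$ or $x = 0$. Also recall from the definition of $\gen(m)$ that $\gen(m)$ is an $R$-submodule containing $m$, and hence $Rm \subseteq \gen(m) \subseteq M$.

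For the forward direction, assume $M$ is faithful, fix $0 \neq m \in M$ and let $r \in \Ann(\gen(m))$. I would first push the annihilation property down to $Rm$: since $Rm \subseteq \gen(m)$, the hypothesis $r \cdot \gen(m) = 0$ yields $rRm = 0$. Now apply $3$-primeness of $\{0\}$ in $M$ to $r \in R$ and $m \in M$: since $rRm \subseteq \{0\}$ and $m \neq 0$, we must have $rM = 0$. Faithfulness then forces $r = 0$, so $\Ann(\gen(m)) = \{0\}$.

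For the reverse direction, assume $\Ann(\gen(m)) = 0$ for every $0 \neq m \in M$. Let $r \in R$ with $rM = 0$; I need to show $r = 0$. Because $M$ is $3$-prime, $RM \neq 0$, and in particular $M \neq \{0\}$, so we can pick some $0 \neq m \in M$. Since $\gen(m) \subseteq M$, we have $r \cdot \gen(m) \subseteq rM = 0$, so $r \in \Ann(\gen(m)) = 0$. Thus $r = 0$ and $M$ is faithful.

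The only genuinely non-trivial step is the use of $3$-primeness in the forward direction, which is the standard mechanism by which one passes from annihilating a single cyclic set $Rm$ to annihilating all of $M$; the reverse direction is essentially a one-line consequence of $\gen(m) \subseteq M$ together with the non-triviality $M \neq 0$ coming from the $3$-prime hypothesis. I do not anticipate any real obstacle.
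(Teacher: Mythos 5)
Your proof is correct and follows essentially the same route as the paper: $3$-primeness upgrades $rRm=0$ to $rM=0$, faithfulness finishes the forward direction, and the reverse direction is just the inclusion $\gen(m)\subseteq M$ together with $M\neq 0$. One minor point in your favour: the paper's proof asserts $\gen(m)=Rm$ (which in general needs an identity or $m\in Rm$), whereas you only use the unconditional inclusion $Rm\subseteq\gen(m)$, which is slightly cleaner.
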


 \begin{proof}
     Suppose $M$ is $3$-prime, so $ \{0 \}$ is $3$-prime. By definition $\Ann(\gen(m)) = \{r  \in R: r \gen(m)=0 \}$. Let $r  \in R$ such that $r \gen(m)=0$. It follows that $rRm=0$ since $\gen(m)=Rm.$ Hence we have $rM=0$ or $m=0$ since $ \{0 \}$ is $3$-prime. Since $m \ne 0$ then we have $rM=0$. This implies that $r=0$ because $M$ is faithful. Thus $\Ann(\gen(m))=0.$ Conversely suppose that $r \in \Ann(\gen(m)) =0$ and $rM=0.$ We have $\Ann(\gen(m)) =0 \Longrightarrow r \gen(m)=0 \Longrightarrow r Rm =0$. Since $M$ is $3$-prime we have $rM=0$ and since $\Ann(\gen(m))=0$ then $r=0.$
 \end{proof}
 \begin{pro}
     If $M$ is $0,2$-classical prime. Then for all faithful $R$-submodules $A$ and $B$ of $M$ where $\Ann(B) A \ne 0$ we have $\Ann(A)=0.$
 \end{pro}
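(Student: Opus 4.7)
Write $C := \Ann(A)$ and $D := \Ann(B)$. By the remark preceding this proposition, each of these is a left ideal of $R$, and hence in particular a left $R$-subgroup. The key test object in the argument will be the $R$-submodule $N := A + B$ of $M$.

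The first step is to verify that $CDN = 0$. Since $A$ and $B$ are $R$-submodules, $DA \subseteq A$, so $C(DA) \subseteq CA = 0$. Trivially $DB = 0$ and hence $C(DB) = 0$. Combining these,
\[
  CD(A+B) \;\subseteq\; C(DA) + C(DB) \;=\; 0.
\]
Thus $CDN = 0$ on the submodule $N$, while neither individual product $CDA$ nor $CDB$ separately is informative.

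The second step is to invoke the $2$-classical prime hypothesis on the left $R$-subgroups $C,D$ of $R$ and the $R$-submodule $N$ of $M$. This yields the dichotomy $CN = 0$ or $DN = 0$. The assumption $\Ann(B)\cdot A \ne 0$ says exactly that $DA \ne 0$, and since $DA \subseteq DN$, the alternative $DN = 0$ is excluded. Therefore $CN = 0$, and in particular $CB = 0$; that is,
\[
  \Ann(A) \;\subseteq\; \Ann(B).
\]

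The third and final step is to upgrade this containment to $\Ann(A)=0$ by using that $A$ is a faithful $R$-submodule. Unpacking faithfulness of $A$ in the standard way — every $r \in R$ with $rA = 0$ must be $0$ — yields $\Ann(A) = 0$ at once. I expect the main subtleties to lie in the set-up rather than in this last deduction: one must (i) choose the correct test submodule $A+B$, since testing on $A$ or $B$ alone gives no new information, and (ii) be careful that annihilators in a near-ring are a priori only left ideals, which is precisely why $2$-classical primeness (applicable to left $R$-subgroups) rather than merely $0$-classical primeness is required — justifying the ``$0,2$-classical prime'' hypothesis in the statement. A symmetric application interchanging the roles of $A$ and $B$ also gives $\Ann(B)\subseteq\Ann(A)$ if one wishes to record this, though it is not needed for the conclusion.
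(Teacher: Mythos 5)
Your final step already proves the proposition on its own: with the paper's definition of faithfulness, a faithful $R$-submodule $A$ satisfies $rA=0\Rightarrow r=0$, which is literally the statement $\Ann(A)=0$. So neither the classical primeness of $M$ nor the hypothesis $\Ann(B)A\ne 0$ does any work. This is in fact exactly how the paper's own proof concludes (``It follows that $\Ann(A)=0$ because $A$ is faithful''), so on the one step that actually matters you and the paper coincide, and your conclusion stands.

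The difficulty is with the two preparatory steps you present as the heart of the argument; they do not survive in the near-ring setting. First, $N=A+B$ need not be an $R$-submodule of $M$: $(M,+)$ is not assumed abelian, so $A+B$ need not even be a subgroup, and since a near-ring module has no left distributive law, $r(a+b)$ need not lie in $A+B$ even though $ra\in A$ and $rb\in B$. Second, the containment $CD(A+B)\subseteq C(DA)+C(DB)$ silently uses $d(a+b)=da+db$, which is precisely the identity that fails for near-ring modules; so $CDN=0$ is not established, and the appeal to $2$-classical primeness is not legitimately available. None of this damages the stated proposition, because your last step is self-sufficient; but if the intended, nontrivial version of the result drops the word ``faithful'' (as the triviality of the statement suggests it should), your argument would depend on these steps, and they would need genuine repair — for instance by working with the $R$-submodule generated by $A\cup B$ and re-deriving the annihilation there, which is not automatic.
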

 \begin{proof}
     Suppose $M$ is $2$-classical prime and let $0 \ne A,B \leq_R M$. Then $\Ann(A)A=0$ and $\Ann(B)B=0.$ Since  $M$ is $2$-classical prime and $\Ann(B) A \ne 0$ then $\Ann(A)A=0$. It follows that $\Ann(A)=0$ because $A$ is faithful.
     \end{proof}
     \begin{pro} Let $R$ be a near-field.
         Consider the $R$-module $R^n$. Let $0 \ne S \lhd _R R^n$. Then $\Ann(S)=0.$
     \end{pro}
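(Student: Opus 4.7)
The plan is to use the explicit classification of $R$-ideals of $R^n$ given by \Cref{th4} together with the fact that a near-field has a multiplicative identity (which follows from $(R^*,\cdot)$ being a group).

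First I would invoke \Cref{th4} to write $S = S_1 \times S_2 \times \cdots \times S_n$, where each $S_i$ is either $\{0\}$ or $R$. Since $S \ne 0$, there must exist at least one index $j \in \{1,\ldots,n\}$ with $S_j = R$. This pins down a coordinate that $S$ ``fills completely.''

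Next, since $R$ is a near-field, the multiplicative identity $1 \in R$ exists. Consider the standard basis vector $e_j = (0,\ldots,0,1,0,\ldots,0) \in R^n$, with $1$ in position $j$ and $0$ elsewhere. Because $1 \in R = S_j$ and $0 \in S_i$ for every $i \ne j$, we have $e_j \in S$. Now take any $r \in \Ann(S)$. Then $r \cdot e_j = 0$, and by the componentwise scalar multiplication on $R^n$, this yields $(0,\ldots,0,r\cdot 1,0,\ldots,0) = (0,\ldots,0,r,0,\ldots,0) = 0$, which forces $r = 0$. Hence $\Ann(S) = \{0\}$.

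There is no real obstacle here: the entire argument hinges on finding a vector in $S$ whose annihilator is trivial, which is immediate once the product structure of $S$ is in hand and $1 \in R$ is used. The only point that deserves a brief mention in the write-up is the existence of the identity, which is why the assumption that $R$ is a near-field (rather than merely a near-ring) is essential.
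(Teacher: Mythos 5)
Your argument is correct and follows essentially the same route as the paper: both invoke the classification of $R$-ideals of $R^n$ as products $S_1\times\cdots\times S_n$, locate a coordinate $j$ with $S_j=R$, and conclude from $\Ann(S)\cdot R=0$ that $\Ann(S)=0$. You merely make explicit the vector $e_j$ and the use of the identity $1\in R$, which the paper leaves implicit.
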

     \begin{proof}
         We have $\Ann(S)S=0$. By Theorem \ref{thm:ege}
all the subspaces (or $R$-ideals) of $R^n$ are all of the form $ S_1 \times S_2 \times \cdots \times S_n$ where $S_i= \{ 0 \}$ or $S_i=R$ for $i=1, \ldots,n$. Suppose, without loss of generality, that at the position $j$, $S_j=R$. So $\Ann(S)R=0 $ implies that $\Ann(S)=0.$
     \end{proof}

     \begin{pro}[Corollary~15 in~\cite{djagbajan}]
     \label{rp}
			Let $R$ be a proper near-field and $T\subseteq R^n$. Then,
			$T$ is an  $R$-submodule if and only if $T=\bigoplus_{i=1}^\ell Ru_i$
			for some nonzero vectors $u_1,\ldots,u_\ell$ with mutually disjoint supports. Futheremore,  $T=\bigoplus_{i=1}^\ell Ru_i$ implies that  $T=\sum_{i=1}^\ell Ru_i$ and $x_i+x_j=x_j+x_i$ for all $x_i \in Ru_i$ and $x_j \in Ru_j.$
     \end{pro}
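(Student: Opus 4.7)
My plan is to split Proposition~\ref{rp} into two parts: the equivalence, and the ``furthermore'' clause. The equivalence is exactly the content of Theorem~\ref{thm:ege1}, so I would simply invoke that result without further ado. What remains to be argued is the additional content that $T=\sum_{i=1}^{\ell} Ru_i$ and that elements from different summands commute additively.

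For the equality $T=\sum_{i=1}^{\ell} Ru_i$, I would note that this is essentially a restatement of what an internal direct sum of $R$-subgroups means: every element of $\bigoplus_{i=1}^{\ell} Ru_i$ is by definition a (unique) sum of elements drawn from the factors $Ru_i$, so $T$ coincides with the ordinary set-theoretic sum $\sum_{i=1}^{\ell} Ru_i$ inside $R^n$. Conversely, each $Ru_i \subseteq T$ and $T$ is closed under addition since it is an $R$-submodule, so $\sum_{i=1}^{\ell} Ru_i \subseteq T$. This is essentially bookkeeping.

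For the commutativity claim $x_i + x_j = x_j + x_i$ (with $x_i \in Ru_i$, $x_j \in Ru_j$), the cleanest route is to recall the fact, stated earlier in the paper citing~\cite{pilz2011near}, that the additive group of a near-field is abelian. Since $R^n$ carries coordinate-wise addition, $(R^n,+)$ inherits abelianness from $(R,+)$, and the identity becomes immediate. If one wished to avoid this invocation and argue intrinsically from the structure provided by Theorem~\ref{thm:ege1}, one could instead use the disjoint-support condition on the $u_i$: writing $x_i = r_i u_i$ and $x_j = r_j u_j$ (with $i\ne j$), the supports of $x_i$ and $x_j$ are disjoint, so at each coordinate $k$ at most one of $(x_i)_k$ and $(x_j)_k$ is nonzero. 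Using that $0$ is the additive identity of $R$, one then checks coordinate by coordinate that $(x_i+x_j)_k = (x_j+x_i)_k$.

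I do not anticipate a genuine obstacle here; the hardest part is simply deciding which of the two routes above to present. I would favor the first (appeal to abelianness of the near-field) since it is a single line, and merely remark that the disjoint-support structure would also give the conclusion directly. The case $i=j$ of the commutativity assertion reduces to commutativity within the single subgroup $Ru_i$, which again follows from abelianness of $(R,+)$.
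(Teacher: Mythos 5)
Your proposal is correct, but it takes a different route from the paper on the key point. For the commutativity assertion the paper does \emph{not} invoke abelianness of $(R,+)$; instead it runs the classical commutator argument: since each $Ru_i$ is a normal subgroup of the ambient additive group, the commutator $x_i+x_j-x_i-x_j$ lies in $Ru_i$ (because $x_j-x_i-x_j\in Ru_i$ by normality) and likewise in $Ru_j$, and directness of the sum forces $Ru_i\cap Ru_j=\{0\}$, hence the commutator vanishes. Your one-line appeal to the fact, already recorded in the paper, that the additive group of a near-field is abelian (so $(R^n,+)$ is abelian coordinate-wise) is perfectly valid and is in fact shorter; it does make the ``furthermore'' clause essentially vacuous in this setting, which is worth being aware of. What the paper's argument buys is independence from that special property of near-fields: the commutator proof works for any internal direct sum of normal subgroups in a possibly non-abelian group, so it would survive a generalization from near-fields to near-rings whose additive group need not be abelian. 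Your alternative coordinate-wise argument via disjoint supports is also sound. On the remaining points (invoking Theorem~\ref{thm:ege1} for the equivalence, and treating $T=\sum_{i=1}^{\ell}Ru_i$ as bookkeeping about internal direct sums) you match the paper, which likewise takes these for granted and devotes its proof entirely to the commutativity claim.
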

     \begin{proof}
         Assume that $T=\sum_{i=1}^\ell Ru_i$ where $ \lbrace Ru_i : \thickspace  i =1, \cdots, k \rbrace$ are  ideals of $R^\ell.$ Let $x_i \in Ru_i$ and $ x_j \in Ru_j$ such that $i \neq j.$ Indeed $x_i+x_j-x_i-x_j \in Ru_i$ since $x_j-x_i-x_j \in Ru_i$ since $(Ru_i,+)$ is a normal subgroup of $(R^\ell,+).$ Also $x_i+x_j-x_i-x_j \in Ru_j$ since $x_i+x_j-x_i \in Ru_j.$ It follows that $x_i+x_j-x_i-x_j \in Ru_i \cap Ru_j \subseteq Ru_j \cap \sum _{i \in I, i \ne j}Ru_i= \lbrace 0 \rbrace$. But $Ru_i \cap Ru_j \neq \emptyset$ since $ 0 \in Ru_i \cap Ru_j,$ so   $Ru_i \cap M_j=\lbrace 0 \rbrace. $ It follows that $x_i+x_j-x_i-x_j \in Ru_j \cap Ru_j=\lbrace 0 \rbrace.$ Hence $x_i+x_j=x_j+x_i.$
     \end{proof}
     \begin{pro} 		Let $R$ be a proper near-field. For any $u_i, r_j \in R$ such that $r_jRu_i=0$ for all $j\ne i$,  we have
         $\bigoplus_{i=1}^k \Ann(Ru_i) = \Ann(\bigoplus_{i=1}^k Ru_i ) $
     \end{pro}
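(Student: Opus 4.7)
The plan is to prove both inclusions, exploiting the disjoint-support structure of $u_1,\ldots,u_k$ from Proposition \ref{rp} together with the module axiom $(r_1+r_2)\circ m = r_1\circ m + r_2\circ m$. The easy inclusion $\Ann\bigl(\bigoplus_{i=1}^k Ru_i\bigr) \subseteq \bigoplus_{i=1}^k \Ann(Ru_i)$ follows immediately: for any $r$ on the left, each $Ru_j$ is contained in $\bigoplus_i Ru_i$, so $r\cdot Ru_j = 0$ and hence $r \in \Ann(Ru_j)$. Representing $r$ as $r + 0 + \cdots + 0$ (with the nonzero entry placed in any one slot) exhibits $r$ as an element of $\bigoplus_i \Ann(Ru_i)$.

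For the reverse inclusion, I would take $r = r_1 + \cdots + r_k$ with $r_j \in \Ann(Ru_j)$ and an arbitrary $x = s_1 u_1 + \cdots + s_k u_k \in \bigoplus_i Ru_i$, and show $r\cdot x = 0$. Using left distributivity of the module action, $r\cdot x = \sum_j r_j \cdot x$, so it suffices to prove $r_j\cdot x = 0$ for each $j$. Since the $u_i$ have mutually disjoint supports in $R^n$ and the $R$-action is componentwise, the vectors $s_i u_i$ occupy disjoint coordinates, which allows me to peel $r_j$ through the sum: $r_j \cdot \sum_i s_i u_i = \sum_i r_j \cdot (s_i u_i) = \sum_i (r_j s_i) u_i$, the last equality using the module associativity $(r_1 r_2)\circ m = r_1(r_2\circ m)$. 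For $i\neq j$ the hypothesis $r_j R u_i = 0$ gives $(r_j s_i) u_i \in r_j R u_i = \{0\}$, while for $i=j$ the assumption $r_j \in \Ann(Ru_j)$ yields $(r_j s_j) u_j = 0$. Summing these zeros gives $r_j\cdot x = 0$ and hence $r\cdot x = 0$.

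The main obstacle is justifying the step $r_j \cdot \sum_i s_i u_i = \sum_i r_j\cdot(s_i u_i)$, since right distributivity $r(m+n) = rm + rn$ fails in general near-ring modules. This is exactly where the disjoint-supports structure inside $R^n$ from Proposition \ref{rp}, coupled with the componentwise nature of the scalar action, is essential: disjoint coordinates make the addition $\sum_i s_i u_i$ behave like a concatenation, and componentwise scalar multiplication then commutes with this particular sum. Making that coordinate-by-coordinate observation precise is the only non-routine part of the argument; after it, the claimed annihilation reduces to the two annihilation hypotheses together with the module axioms already in the paper.
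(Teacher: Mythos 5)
Your proof is correct, but the key step is justified differently from the paper's argument, and the difference is worth noting. Both proofs must get around the failure of $r(m+n)=rm+rn$. The paper keeps the scalar $x=\sum_j r_j$ intact and proves $x\bigl(\sum_i x_i\bigr)=\sum_i xx_i$ by induction on $k$, using the fact that each $Ru_i$ is an $R$-ideal (so $x(x_1+x_2)-xx_1\in Ru_2$), that distinct summands meet only in $0$, and the additive commutativity established in the preceding proposition; only afterwards does it split $x$ into the $r_j$ and invoke $r_jRu_i=0$. You instead split the scalar first, via the always-valid module axiom $(r_1+r_2)\circ m=r_1\circ m+r_2\circ m$, reducing to showing each $r_j$ annihilates $x=\sum_i s_iu_i$, and you justify $r_j\sum_i s_iu_i=\sum_i r_j(s_iu_i)$ by a direct coordinatewise computation: since the $u_i$ have disjoint supports and the action on $R^n$ is componentwise, each coordinate of the sum has at most one nonzero contribution, and zero-symmetry ($r\cdot 0=0$, which holds here since $R$ is a zero-symmetric near-field by the paper's standing convention) lets the scalar pass through. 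Your route is more elementary and self-contained for the concrete ambient space $R^n$; the paper's route is more structural and would work for any internal direct sum of $R$-ideals in an abstract module, without reference to coordinates. The one point you should make explicit when writing this up is the appeal to $r\cdot 0=0$ in the coordinate argument, since that is precisely what makes the disjoint-support sum behave additively under the scalar action. The easy inclusion is handled identically in both proofs.
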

     \begin{proof}
         Let $x \in \bigoplus_{i=1}^k\Ann(Ru_i)  $. Then there exist $r_i \in 		\Ann(Ru_i) $ such that $x=\bigoplus_{i=1}^kr_i$. In fact, $r_i \in \Ann(Ru_i) $ implies $r_iRu_i=0.$ Let's first show that $x (\bigoplus_{i=1}^kRu_i)=x (\sum_{i=1}^kRu_i)=\sum_{i=1}^k(xRu_i)$.

         Suppose $I=\{1, \ldots, k \}$ where $k \in \mathbb{N}.$ We proceed by induction on $k$. For $k=2,$ let $T= Ru_i \oplus Ru_2. $ So $T=Ru_i+Ru_2$ and $Ru_i \cap Ru_2= \lbrace 0 \rbrace$ where $Ru_i$ and $Ru_2$ are $R$-idea of $R^k.$ Let  $m \in T$. Then $m=x_1+x_2$ for  $x_1 \in Ru_i$ and $x_2 \in Ru_2.$ We need to show that $x(x_1+x_2) = xx_1+xx_2.$  It suffices to show that $x(x_1+x_2)-xx_1-xx_2 \in Ru_i \cap Ru_2.$ Since $Ru_2$ is $R$-ideals of $T,$  $x(x_1+x_2)-xx_1 \in Ru_2$. Then $x(x_1+x_2)-xx_1-xx_2 \in Ru_2.$ Also we have $x(x_2+x_1)-xx_2 \in Ru_i.$ So $x(x_2+x_1) -xx_2-xx_1 \in  Ru_i$. By Proposition \ref{rp} we have $ x(x_2+x_1) -xx_2-xx_1  \in Ru_1 \cap Ru_2 $.

Assume that if $m=\sum_{i =1}^{k-1} x_i$ where $x_i \in Ru_i$, then $xm=x \big (\sum_{i =1}^{n-1}x_i \big ) = \sum_{i =1}^{k-1}( xx_i)$. Let $m \in T ,$ $x \in R$ and suppose $m=x_1+ \ldots+x_k$ where $x_i \in Ru_i$. By Proposition \ref{rp} we have
$x(x_1+\ldots+x_k) - xx_1- xx_2-\ldots-xx_k =  x(x_2+\ldots+x_k+x_1) -x(x_2+x_3+\ldots+x_k)-xx_1 \in Ru_1.$
Also, $
x(x_1+x_3+\ldots+x_k+x_2) -x(x_1+x_3+\ldots+x_k)-xx_2 \in x_2.$
By the same process, we also have
$
x(x_1+x_2+\ldots+m_{n-1}+x_k) -x(x_1+x_2+\ldots+m_{n-1})-xx_k \in Ru_k.$

It follows that,
$
x(x_1+\ldots+x_k) - xx_1- xx_2-\ldots-xx_k \in \bigcap_{j=1}^{k} Ru_j \subseteq Ru_1 \cap \sum_{j=2}^kRu_j= \lbrace 0 \rbrace.$
Thus $
xm= x\big ( \sum_{i=1}^k x_i \big ) = \sum_{i=1}^k( xx_i).$ Thus $x (\bigoplus_{i=1}^kRu_i)=x (\sum_{i=1}^kRu_i)=\sum_{i=1}^k( \sum_{j=1}^kr_j)Ru_i$ and since for all $j\ne j$ such that $r_jRu_i=0$ then $x (\sum_{i=1}^kRu_i)=\sum_{i=1}^k\sum_{j=1}^k(r_jRu_i)=0$. Thus $x \in \Ann(\bigoplus_{i=1}^k Ru_i ).$ 

For the other inclusion, let $ x \in \Ann(\bigoplus_{i=1}^k Ru_i ).$ We have $x\bigoplus_{i=1}^k Ru_i=0.$ Since we know that $xRu_i=0$. So $x \in \Ann(Ru_i)$ for all $i$. Thus $x \in \bigoplus_{i=1}^k\Ann(Ru_i).$

     \end{proof}
		\section{Concluding comments}
		
		In this article, for $v=0,2,3,c$ we generalize the concept of $v$-prime $R$-module to the notion of $v$-classical prime $R$-module. We proved some few properties about the characterization of classical prime near-ring modules. We recommend as future  work to investigate the radical of classical prime near-ring modules.
	
		\paragraph{Acknowledgments}
			This work was supported by the Council-funded from the Office of Research Development funding at Nelson Mandela University.
\newpage

	\end{document}